\documentclass{article}

\usepackage[titletoc, title]{appendix}
\usepackage[english]{babel}
\usepackage[mathscr]{eucal}
\usepackage[margin = 3.5 cm]{geometry}
\usepackage[utf8]{inputenc}
\usepackage[dvipsnames]{xcolor}
\usepackage{amsfonts, amsmath, amssymb, amsthm, bbm}
\usepackage{cancel}
\usepackage{csquotes}
\usepackage{enumitem}
\usepackage{float}
\usepackage{graphicx}
\usepackage{longtable}
\usepackage{mathrsfs}
\usepackage{relsize}
\usepackage{caption}
\usepackage{hyperref}
\hypersetup{
    colorlinks = true,
    linkcolor = black,
    filecolor = black,
    urlcolor = black,
    citecolor = black
}
\usepackage{pgfplots, tikz}
\pgfplotsset{compat = 1.18}
\usetikzlibrary{automata, positioning, arrows.meta, quotes}
\tikzset{every node/.style = {font = \tiny, sloped}}
\tikzset{every state/.style={rectangle, minimum size = 0pt, draw = none, font = \normalsize}}

\theoremstyle{plain}
\newtheorem{theorem}{Theorem}[section]

\newtheorem*{theorem*}{Theorem}
\newtheorem*{corollary*}{Corollary}
\newtheorem*{lemma*}{Lemma}
\newtheorem*{proposition*}{Proposition}

\theoremstyle{definition}

\newtheorem{example}[theorem]{Example}
\newtheorem*{definition*}{Definition}
\newtheorem*{example*}{Example}

\theoremstyle{remark}
\newtheorem{remark}[theorem]{Remark}

\newtheorem*{remark*}{Remark}
\newtheorem*{assumption*}{Assumption}

\newcommand{\preord}[1]{\preccurlyeq_{#1}}
\newcommand{\npreord}[1]{\not\preccurlyeq_{#1}}
\newcommand{\rn}{\mathcal{G}}
\newcommand{\spc}{\mathscr{S}}
\newcommand{\cmp}{\mathscr{C}}
\newcommand{\rct}{\mathscr{R}}
\newcommand{\nat}{\Z_{\geq 0}}
\renewcommand{\ss}{\mathcal{S}}

\newcommand{\st}{ \ : \ }
\newcommand{\nrm}[1]{\left\|#1\right\|}
\newcommand{\abs}[1]{\left|#1\right|}
\newcommand{\rb}[1]{\left(#1\right)}
\newcommand{\brb}[1]{\big(#1\big)}
\newcommand{\Brb}[1]{\Big(#1\Big)}

\newcommand{\bsb}[1]{\big[#1\big]}

\newcommand{\bbsb}[1]{\bigg[#1\bigg]}
\newcommand{\cb}[1]{\left\{#1\right\}}
\newcommand{\bcb}[1]{\big\{#1\big\}}

\newcommand{\Z}{\mathbb{Z}}

\newcommand{\R}{\mathbb{R}}

\renewcommand{\P}{\mathbb{P}}

\newcounter{case}
\renewcommand{\thecase}{\Roman{case}}
\newcounter{hypa}
\renewcommand{\thehypa}{\textup{(}\textit{a\arabic{hypa}}\textup{)}}
\newcounter{hypb}
\renewcommand{\thehypb}{\textup{(}\textit{b\arabic{hypb}}\textup{)}}

\title{\textbf{Stochastic ordering tools for continuous-time Markov chains and applications to reaction network models}}
\author{Daniele Cappelletti, Giulio Cuniberti, and Paola Siri \\
    \small Department of Mathematical Sciences, Politecnico di Torino, Turin, Italy \\
    \small \texttt{\string{daniele.cappelletti,giulio.cuniberti,paola.siri\string}@polito.it}}
\date{March 2026}

\begin{document}

\maketitle

\begin{abstract}
    Stochastic reaction networks are mathematical models with a wide range of applications in biochemistry, ecology, and epidemiology, and are often complex to analyze. Except for some special cases, it is generally difficult to predict how the abundances of all considered species evolve over time. A possible approach to address this issue is to develop tools to compare the model under study with a similar one whose behavior is better understood. The main contribution of our work is to provide direct and computable conditions that can be used to ensure the existence of an ordered coupling between two stochastic reaction networks and to identify which parameter changes in a given model lead to an increase or decrease in the count of certain species. We also make available an algorithm that implements our theory, and we illustrate it with several applications.
\end{abstract}

\paragraph{Keywords:} continuous-time Markov chain, reaction network, stochastic ordering.
\paragraph{MSC codes:} 60E15, 60J27, 60J28, 92C42.

\section{Introduction}

Markov chain models for reaction networks have been studied systematically since the 1970s (see \cite{kurtz1972relationship} and \cite{gillespie1976general}, for example). Such stochastic processes count the number of individuals of different interacting \emph{species} that are present at a certain time $t$. The transitions are given by a finite set of \emph{reactions} that describe recombination, production, and degradation of the species. Stochastic reaction networks have many applications in biochemistry (see, for example, \cite{jeon2018mathematical}), synthetic biology (e.g.\ \cite{briat2016antithetic}), ecology (e.g.\ \cite{veloz2021reaction}), chemistry (e.g.\ \cite{sabbioni2024final}), and epidemiology (e.g.\ \cite{khudabukhsh2020survival}). In these contexts, both dynamic and long-term behaviors are of interest and are often difficult to analyze directly, hence having tools to compare a given model with a simpler one can be useful. Also, it is important for experimentalists to understand which changes in the model parameters lead to an average increase or decrease in some of the species abundances. A suitable mathematical framework for this context is the theory of stochastic orderings. To introduce it, let us consider the simple reversible conversion
\begin{equation}
\label{eq:int}
    \text{S} \rightleftharpoons \text{P},
\end{equation}
and imagine increasing the rate of the forward reaction while decreasing the rate of the backward one. Intuitively, one expects the resulting system to express more P and less S at all times. To make it formal, if $X = \rb{X_\text{S}, X_\text{P}}$ denotes the continuous-time Markov chain associated with the original system and $Y = \rb{Y_\text{S}, Y_\text{P}}$ the process modeling the modified system, one would like to guarantee that $X$ and $Y$ can be constructed in such a way that $X_\text{S}(t) \geq Y_\text{S}(t)$ and $X_\text{P}(t) \leq Y_\text{P}(t)$ for all $t \geq 0$, with probability one. This is equivalent to saying that the laws of the original and modified processes are in the \emph{usual stochastic order}, as referred to in the literature, and makes it formally precise what we mean by increase or decrease of a species abundance. The same idea and construction for $\text{S} \rightleftharpoons \text{P}$ apply to larger and more sophisticated reaction networks, with the aim of studying how certain changes in the reaction rates may influence the number of molecules of each species.

As can be seen from classic stochastic ordering theory textbooks such as \cite{muller2002comparison} and \cite{shaked2007stochastic}, there are only a few ready-to-use results for the markovian framework, mostly dating back several decades and not directly and easily applicable to stochastic reaction networks; see, for example, \cite{kamae1977stochastic} and \cite{massey1987stochastic}. To the best of our knowledge, the first application of ideas and tools from the field of stochastic orderings to the specific setting of reaction network models occurred very recently, in \cite{campos2023comparison}. Here, the results from \cite{kamae1977stochastic} and \cite{massey1987stochastic} are summarized, and their unsuitability for stochastic reaction networks is discussed. A new theory is developed, and several real-world scenarios where it can be fruitfully applied are given. As pointed out in \cite{campos2023comparison}, before the publication of their paper there were already some works on sensitivity analysis of mass-action models (see \cite{gunawan2005sensitivity} and \cite{gupta2014sensitivity}), pursuing similar general goals but more specialized in scope, since they only accounted for local changes of the parameters. Moreover, the results of \cite{campos2023comparison} have already been used in more recent papers, for example in \cite{bruno2024analysis} to investigate the effect of chromatin modification dynamics on epigenetic cell memory and in \cite{hirsch2023error} to study how much Hill-function-based approximations deviate from standard gene-regulation mass-action models. The setting in \cite{campos2023comparison} is actually slightly more general than stochastic reaction networks. While powerful, their results remain however quite abstract: the verification of the hypotheses requires checking infinitely many inequalities involving the rate matrices of the two models to compare, which is usually impractical in concrete applications. 
  
The goal of this work is to strengthen the applicability of the framework presented in \cite{campos2023comparison} on concrete reaction networks, allowing a broader group of researchers in the field to use comparison tools to better understand the behavior of the models they are studying. Our contribution is twofold. First, we provide a new formulation of \cite[Theorem~3.1]{campos2023comparison} that generalizes matrix preorders to a broader class of binary relations, and that accommodates potentially explosive chains; our proof is different and considerably shorter. Second -- and most importantly -- we give a finite set of explicit linear conditions that can be easily checked by a computer to guarantee the validity of the hypotheses of the general theorem in the specific setting of mass-action stochastic reaction networks. We explicitly write an algorithm implementing this check which is able, for instance, to prove the existence of a stochastic ordering for \eqref{eq:int} -- see Example~\ref{ex:rev}. As we explain in Section~\ref{sub:imp}, our algorithm is highly parallelizable and relies on existing and efficient numerical methods developed in the context of linear programming. We then apply our code to detect stochastic orderings in several examples, which are more complex than \eqref{eq:int} and would be harder to analyze by hand. Finally, to further motivate the study of this technique, we illustrate in Example~\ref{ex:erg} how our methods can be useful to detect ergodicity.

\section{Preliminaries}

Even though our first and more general result -- Theorem~\ref{thm:pcc} -- is stated for arbitrary Continuous-Time Markov Chains (CTMCs in the following, see \cite{norris1998markov} as a reference), our main interest is to particularize it to stochastic models for reaction networks -- as we will do in Theorem~\ref{thm:lsc}. Having this in mind, we will start by recalling what reaction networks are and describing the stochastic framework in which they are usually modeled and studied: for a comprehensive treatment of the subject we refer the reader to \cite{anderson2015stochastic}. Before focusing on reaction networks, however, let us briefly introduce the general notation that will be used throughout this paper.

\subsection{Notation and terminology}
\label{sub:not}

In the following, we will use the notations $\nat = \cb{x \in \Z \st x \geq 0}$ and $\R_{\geq 0} = \cb{x \in \R \st x \geq 0}$ for the sets of non-negative integers and real numbers. For $x, y \in \R$, let $x \wedge y$ denote the minimum of $x$ and $y$. For all $x, n \in \nat$ define the \textbf{falling factorial} $\rb{x}_n$ as
\begin{equation*}
    \rb{x}_n = \underbrace{x(x-1) \dotsm (x-n+1)}_{n \ \text{factors}} = \prod_{k = 1}^n (x-k+1),
\end{equation*}
to be interpreted for $n = 0$ as $\rb{x}_0 = \prod\limits_{k \in \emptyset} (x-k+1) = 1$.

A \textbf{binary relation} on a set $E$ is a subset of $E \times E$. A \textbf{preorder} is a binary relation that is reflexive and transitive, but not necessarily antisymmetric. Every matrix $M \in \R^{m,d}$ with $m \geq 1$ defines the preorder $\preord{M}$ on $\R^d$
\begin{equation}
\label{eq:preord}
    x \preord{M} y \ \ \text{if and only if} \ \ M(x-y) \leq 0
\end{equation}
for each $x,y \in \R^d$, where the above inequality in $\R^m$ is intended component-wise, and $x \preord{M} y$ is a shorthand notation for $(x,y) \! \in \, \preord{M}$. This relation can be restricted to a subset $\Gamma \subseteq \R^d$, and the matrix giving rise to a (restricted) matrix preorder is not unique. For instance, $\preord{M}$ is invariant with respect to any multiplication of one of the rows of $M$ with a positive scalar. This observation motivates the definition of the $\Gamma$-dependent equivalence relation on the set of $\R$-valued matrices with $d$ columns
\begin{equation}
\label{eq:equiv}
    M \sim_{\Gamma} M' \ \ \text{if and only if} \ \ \forall x,y \in \Gamma \ \ x \preord{M} y \Leftrightarrow x \preord{M'} y
\end{equation}
for any $M \in \R^{m,d}$ and $M' \in \R^{m',d}$, with $m,m' \geq 1$.

For any matrix $M$, we will denote by $M^\intercal$ its transpose and by $M_{i,*}$ its $i$-th row. For all $x = \rb{x_1,\dots,x_d}, \ y = \rb{y_1,\dots,y_d} \in \R^d$ denote $\textup{supp}(x) = \bcb{j \in \cb{1,\dots,d} \st x_j \neq 0}$ the support of $x$, $\text{GCD}(x)$ the greatest common divisor of $x_1,\dots,x_d$, $x^\pm = \rb{x_1^\pm,\dots,x_d^\pm}$ the positive/negative part of $x$, $x \odot y = (x_1y_1,\dots,x_dy_d)$ the component-wise product of $x$ and $y$, and $\langle x,y \rangle$ their scalar product. We will also use the notation $\text{supp}(\cdot)$ for the support of a generic function, i.e.\ the subset of the domain whose elements are not mapped to zero. Lastly, the canonical base of $\R^d$ will be written $\cb{e_1,\dots,e_d}$.

Given a càdlàg stochastic process $X = \brb{X(t)}_{t \geq 0}$ taking values in $E \subseteq \Z^d$, denote by $\tau^X$ its \textbf{explosion time}, i.e.\ the random variable
\begin{equation}
\label{eq:exp}
    \tau^X = \inf \bcb{t \geq 0 \st \forall \Gamma \subseteq E \ \text{finite} \ \exists s < t \ \text{with} \ X(s) \notin \Gamma}.
\end{equation}
When $\P\rb{\tau^X = \infty} = 1$, the process $X$ is said to be \textbf{non-explosive}.

\subsection{Stochastic reaction networks}
\label{sub:srn}

A reaction network is a mathematical object historically conceived to describe how different chemical species, located in a given system, can interact and transform into each other (and therefore often referred to in the literature as a \emph{chemical} reaction network). In this original setting, we can think of it as a list of available chemical reactions, that additionally encodes their possible sharing of reactants or products. More generally, a \emph{species} appearing in a reaction network can represent any homogeneous group of entities that can gather in a certain quantity, meet with individuals belonging to other species, and together be (completely or partially) converted -- thanks to some natural phenomenon generically called \emph{reaction} -- into a different combination of units of other species. Formally, a \textbf{reaction network} is defined as a triple $\rn = \rb{\spc, \cmp, \rct}$, where:
\begin{itemize}
    \item $\spc$ is the set of \textbf{species}, i.e.\ a finite set of symbols. We will denote its cardinality by $d = \abs{\spc}$ and refer to it as the \textbf{dimension} of the network;
    \item $\cmp$ is the set of \textbf{complexes},  i.e.\ a finite set of formal linear combinations of species with non-negative integer coefficients; 
    \item $\rct$ is the set of \textbf{reactions}, i.e.\ a finite set of ordered pairs of complexes. Normally, a generic reaction $r \in \rct$ is represented as $\nu_r \to \nu_r'$, where $\nu_r$ is called \textbf{source complex} and $\nu_r'$ \textbf{product complex} of $r$.
\end{itemize}

It is typical to assume that the network is non-redundant: each species appears in at least one complex, each complex appears in at least one reaction, and each reaction is between distinct complexes. In this case, the reaction network is uniquely identified by the \textbf{reaction graph}, i.e.\ the graph whose nodes are $\cmp$ and whose edges are $\rct$. Moreover, each complex can be identified with an element of $\nat^d \simeq \nat^\spc$. This allows the computation of the difference $\xi_r = \nu'_r - \nu_r \in \Z^d$, called the \textbf{reaction vector} of $r$, which describes the net change in the abundances of all species resulting from reaction $r$.

\vskip 10 pt

\begin{example}
\label{ex:mm1}
    Let us illustrate the above definition of a reaction network with the \textbf{Michaelis-Menten} model, first proposed by \cite{michaelis1913kinetik} in the year 1913 (see also \cite{kang2019quasi}). It is one of the networks that will be used to test the algorithm presented in this paper, in Example~\ref{ex:mm2}. It describes the conversion of a substrate S to a product P, catalyzed by an enzyme E reversibly binding to the substrate in an intermediate enzyme-substrate complex C and subsequently unbinding to produce P. The reaction network is characterized by the sets
    \begin{align*}
        \spc & = \cb{\text{S}, \ \text{E}, \ \text{C}, \ \text{P}}, \\
        \cmp & = \cb{\text{S}+\text{E}, \ \text{C}, \ \text{E}+\text{P}}, \\
        \rct & = \cb{\text{S}+\text{E} \to \text{C}, \ \text{C} \to \text{S}+\text{E}, \ \text{C} \to \text{E}+\text{P}},
    \end{align*}
    its dimension is $4$, and it can be depicted by the graph
    \begin{center}
    \begin{tikzpicture}
        \node[state] (S+E)  at (0,0)    {$\text{S}+\text{E}$}; 
        \node[state] (C)    at (2,0)    {$\text{C}$}; 
        \node[state] (E+P)  at (4,0)    {$\text{E}+\text{P}.$};
        \path[-{Stealth[harpoon]}] 
        ([yshift = 2 px]S+E.east)  edge    node{} ([yshift = 2 px]C.west)
        ([yshift = -2 px]C.west)   edge    node{} ([yshift = -2 px]S+E.east);
        \path[-{Stealth}]
        (C)       edge                    node{} (E+P);
    \end{tikzpicture}
    \end{center}
    Bearing in mind the embedding of $\cmp$ in $\nat^4$ introduced above, the reaction vectors for this model are computed as
    \begin{align*}
        \xi_{\text{S}+\text{E} \to \text{C}} & = \nu_{\text{S}+\text{E} \to \text{C}}' - \nu_{\text{S}+\text{E} \to \text{C}} = (0,0,1,0) - (1,1,0,0) = (-1,-1,1,0), \\
        \xi_{\text{C} \to \text{S}+\text{E}} & = \nu_{\text{C} \to \text{S}+\text{E}}' - \nu_{\text{C} \to \text{S}+\text{E}} = (1,1,0,0) - (0,0,1,0) = (1,1,-1,0), \\
        \xi_{\text{C} \to \text{E}+\text{P}} & = \nu_{\text{C} \to \text{E}+\text{P}}' - \nu_{\text{C} \to \text{E}+\text{P}} = (0,1,0,1) - (0,0,1,0) = (0,1,-1,1).
    \end{align*}
\end{example}

A reaction network is only of limited interest on its own. What is really useful about it is the fact that one can easily build a dynamical model on top of it. In order to do so, a \textbf{stochastic kinetics} is defined as a collection of \textbf{rate functions} (or \textbf{propensities}) $\lambda_r : E \subseteq \nat^d \longrightarrow \R_{\geq 0}$, one for each reaction $r \in \rct$. A \textbf{Stochastic Reaction Network} (\textbf{SRN}) is then given by two objects:
\begin{enumerate}
    \item a reaction network $\rn = (\spc,\cmp,\rct)$;
    \item a CTMC $X = \brb{X(t)}_{t \geq 0}$ with state space $E \subseteq \nat^d \simeq \nat^\spc$ and transition rates 
    \begin{equation}
    \label{eq:rma}
        q(x, x') = \sum\limits_{\substack{r \in \rct \\ \xi_r = x'-x}} \lambda_r(x), \ \ \ x \neq x'.
    \end{equation}
\end{enumerate}
Note that, being $\rct$ finite, SRNs cannot have instantaneous states: for any $x \in E$, its total exit rate $-q(x,x) = \sum\limits_{x' \neq x} q(x,x') = \sum\limits_{r \in \rct} \lambda_r(x)$ is necessarily finite.

A common choice of rate functions is given by the \textbf{Mass-Action Kinetics} (\textbf{MAK}): the rate of a reaction is directly proportional to the number of possible combinations of the molecules that give rise to it. Formally,
\begin{equation}
\label{eq:mak}
    \lambda_r(x) = \kappa_r \prod_{j = 1}^d \rb{x_j}_{\nu_{r,j}} = \kappa_r \prod_{j \in \text{supp}(\nu_r)} (x_j)_{\nu_{r,j}},
\end{equation}
where $K = \rb{\kappa_r}_{r \in \rct}$ is a collection of non-negative constants, called \textbf{rate constants}, which are typically written on top of the arrows in the reaction graph.

\vskip 10 pt

\begin{remark}
    In the literature, it is usually assumed that the rate constants are positive. The results of this paper, however, hold in the more general setting of non-negative rate constants. This will become relevant as discussed in Remark~\ref{rem:dif}.
\end{remark}

We conclude this preliminary section with the notion of \textbf{stoichiometric subspace} of a reaction network, defined as
\begin{equation*}
    \ss = \text{span}\cb{\xi_r}_{r \in \rct}.
\end{equation*}
Its role is fundamental because, once the dynamics is initiated at $x^0$, it will be forever trapped in the set $\rb{x^0 + \ss} \cap \nat^d$, called the \textbf{stoichiometric compatibility class} of $x^0$. Usually, the dimension of $\ss$ is denoted by the letter $s$, so that the orthogonal complement $\ss^\perp$ has dimension $d-s$. Each element of $\ss^\perp$ encodes a \textbf{conservation law} for the system, that is, a particular linear combination of the abundances of all species that is not changed by the occurrence of any reaction of the network. For each reaction network considered in this paper, we will always assume that a basis for $\ss^\perp$ has been chosen, and we will list its elements by rows in a matrix $C \in \R^{d-s,d}$. In other words, $C$ can be any matrix with $d-s$ rows and $d$ columns such that $\ker(C) = \ss$.

\section{Pathwise comparison of CTMCs}

The following theorem applies to all CTMCs with values in $\Z^d$. Examples of such processes include MAK SRNs, but at the moment we do not need to assume the presence of any underlying reaction network structure. This result is essentially a stronger version of  \cite[Theorem~3.1]{campos2023comparison}: we allow the CTMCs to potentially explode in finite time and replace the matrix preorder $\preord{M}$ on their state space with a more general binary relation $\rho$ (we will switch back to $\preord{M}$ for the majority of practical applications shortly, but only after showing, in Example~\ref{ex:pcc}, that working with a weaker relation can be useful). Despite the greater generality, our proof is shorter than the one given in \cite{campos2023comparison}. Both are based on coupling arguments, but the coupling constructions are different.

\vskip 10 pt

\begin{theorem}
\label{thm:pcc}
Consider a set $E \subseteq \mathbb{Z}^d$ and two rate matrices $q^X, q^Y : E \times E \longrightarrow \mathbb{R}_{\geq 0}$ with finite diagonal elements, extended to functions $\bar{q}^X, \bar{q}^Y : E \times \mathbb{Z}^d \longrightarrow \mathbb{R}_{\geq 0}$ by setting them equal to zero on $E \times (\mathbb{Z}^d \setminus E)$. Let $\rho \subseteq E \times E$ be a binary relation on $E$ such that for each $(x,y) \in \rho$ and $\xi \in \Z^d$ we have
\begin{enumerate}[label=\textup{(}\textit{\alph*}\textup{)}]
    \item $(x+\xi,y) \notin \rho \implies \bar{q}^X(x,x+\xi) \leq \bar{q}^Y(y,y+\xi)$,
    \label{hyp:a}
    \item $(x,y+\xi) \notin \rho \implies \bar{q}^X(x,x+\xi) \geq \bar{q}^Y(y,y+\xi)$,
    \label{hyp:b}
    \item $(x+\xi,y+\xi) \notin \rho \implies (x+\xi,y+\xi) \notin E \times E$.
    \label{hyp:c}
\end{enumerate}
Then, for each pair of initial conditions $\rb{x^0, y^0} \in \rho$, there exist two CTMCs $X$ and $Y$ on the same probability space, with common state space $E$ and transition rates $q^X$ and $q^Y$, such that $X(0) = x^0$, $Y(0) = y^0$ and
\begin{equation*}
    \P \Brb{\brb{X(t),Y(t)} \in \rho \ \ \forall t \in [0, \tau^X \wedge \tau^Y)} = 1.
\end{equation*}
\end{theorem}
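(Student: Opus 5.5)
We construct an explicit Markovian coupling: a single CTMC $Z=(X,Y)$ on $E\times E$, started at $(x^0,y^0)\in\rho$, whose transition rates are chosen so that every jump out of a state $(x,y)\in\rho$ lands in $\rho$. The marginals must still be CTMCs with rates $q^X$ and $q^Y$.

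\textbf{The coupling rates.} For $(x,y)\in\rho$ and $\xi\in\Z^d$ write $a=\bar q^X(x,x+\xi)$ and $b=\bar q^Y(y,y+\xi)$. We let
\begin{itemize}
    \item the joint jump $(x,y)\to(x+\xi,y+\xi)$ occur at rate $a\wedge b$;
    \item the $X$-only jump $(x,y)\to(x+\xi,y)$ occur at rate $(a-b)^+$;
    \item the $Y$-only jump $(x,y)\to(x,y+\xi)$ occur at rate $(b-a)^+$.
\end{itemize}
On states outside $\rho$, which will never be reached, we use independent moves: $x\to x'$ at rate $q^X(x,x')$ and $y\to y'$ at rate $q^Y(y,y')$.

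\textbf{Checking the construction.} First, the total exit rate from $(x,y)$ is at most $-q^X(x,x)-q^Y(y,y)<\infty$, so $Z$ is a well-defined CTMC on $E\times E$ with no instantaneous states.

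Second, the marginal rates are correct. Summing over $y$-moves, the rate at which $X$ jumps by $\xi$ is
\[
(a\wedge b)+(a-b)^+=a,
\]
and symmetrically the rate at which $Y$ jumps by $\xi$ is $b$.

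Third, no jump leaves $\rho$ or leaves $E\times E$.
\begin{itemize}
    \item If $(x+\xi,y)\notin\rho$, hypothesis \ref{hyp:a} gives $a\le b$, so the $X$-only rate $(a-b)^+$ vanishes.
    \item If $(x,y+\xi)\notin\rho$, hypothesis \ref{hyp:b} gives $a\ge b$, so the $Y$-only rate vanishes.
    \item If $(x+\xi,y+\xi)\notin\rho$, hypothesis \ref{hyp:c} gives $x+\xi\notin E$ or $y+\xi\notin E$. Then $a=0$ or $b=0$ by the zero extension, so $a\wedge b=0$.
    \item Any positive rate automatically lands in $E$ in the relevant coordinates, since the extended rates are zero off $E$.
\end{itemize}
Consequently, on $[0,\tau^Z)$ the embedded jump chain stays in $\rho$ almost surely.

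\textbf{Marginals are CTMCs with the right law.} This is the main technical obstacle. The coordinates of a Markov chain are not automatically Markov, and the construction also has to handle explosion. We argue as follows.
\begin{itemize}
    \item Since the rates of $X$ given the whole state $(x,y)$ depend only on $x$ (Dynkin's lumpability criterion), $X$ is a CTMC with rate matrix $q^X$ up to $\tau^Z$, and likewise $Y$ with $q^Y$. This can be proved by comparing generators on functions of $x$ alone, or via the jump-chain/holding-time description. In the latter approach, each joint state jump moves $X$ by $\xi$ with probability proportional to $a$, independently of $y$; steps where only $Y$ moves are filtered out using the memoryless property.
    \item On explosion: $\tau^Z=\tau^X\wedge\tau^Y$, because $Z$ leaves every finite set of $E\times E$ exactly when one coordinate leaves every finite set of $E$.
    \item After $\tau^Z$, extend each marginal beyond explosion independently, for instance by running fresh independent copies of the minimal chains. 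The statement only concerns $[0,\tau^X\wedge\tau^Y)$, so this extension does not affect the conclusion.
\end{itemize}
The cleanest way to make the marginal claim rigorous is a uniformization-free argument on the jump chain, stopping at the $n$-th jump and letting $n\to\infty$.

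Combining these steps yields $\P\bigl((X(t),Y(t))\in\rho\ \forall t<\tau^X\wedge\tau^Y\bigr)=1$.
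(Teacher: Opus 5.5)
Your coupling is correct and simpler than the paper's. On $\rho$ you use the synchronous coupling with rates $a\wedge b$, $(a-b)^+$, $(b-a)^+$ everywhere. The paper instead runs through an eight-case table and moves the coordinates independently in case~\ref{cas:1}, where all three target pairs lie in $\rho$. In every other case, hypotheses \ref{hyp:a}--\ref{hyp:c} force your rates to coincide with the paper's; for instance, in case~\ref{cas:7} they give $a=b=0$. Your three one-line checks therefore replace the table, and your $Q$ satisfies \eqref{eq:indy}--\eqref{eq:indx} just as the paper's does.

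The gap is in the marginal step, which is where the paper does most of its work (Theorem~\ref{thm:app}).

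\begin{itemize}
\item \emph{The extension past $\tau^Z$ does matter.} The theorem requires $X$ to be a CTMC with rates $q^X$ on its own lifetime $[0,\tau^X)$. On $\{\tau^Y<\tau^X\}$ that interval runs past $\tau^Z$. Your remark that the post-explosion extension ``does not affect the conclusion'' is true for the $\rho$-statement, but not for the claim about the law of $X$.
\item \emph{``$X$ is a CTMC up to $\tau^Z$'' is not a statement about $X$ alone.} The time $\tau^Z$ depends on $Y$ and is not a stopping time for $X$.
\item \emph{Restarting at $\tau^Z$ needs two arguments you do not give.} Gluing the marginal on $[0,\tau^Z)$ to a fresh chain started at $X(\tau^Z-)$ requires showing that $X$ jumps only finitely often before $\tau^Z$ on that event. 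It also requires a Markov-type restart at a time that is not an $X$-stopping time.
\item \emph{The tools you invoke do not cover this.} The lumpability results the paper cites exclude explosion. Generator comparisons on functions of $x$ do not identify the law of an explosive chain. The paper handles all this by truncating to balls, adding a cemetery state, and passing to the limit in finite-dimensional distributions.
\end{itemize}

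Your memorylessness idea can close the gap, and would likely give a shorter route than the appendix. The key fact is that the total rate of $X$-moves from $(x,y)$ is $-q^X(x,x)$, independent of $y$.

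\begin{enumerate}
\item Realize $Z$ by first drawing $X$'s jump chain $(x_k)$ and holding times $H_k\sim\mathrm{Exp}(-q^X(x_k,x_k))$ in advance.
\item During each holding interval, let $Y$ make only its $Y$-only jumps, with $x_k$ frozen and independent randomness.
\item At each $X$-jump by $\xi$, let $Y$ jump jointly with probability $(a\wedge b)/a$.
\end{enumerate}

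Then $X$ is a minimal $q^X$-chain for all times by construction. On $\{\tau^Y<\tau^X\}$, memorylessness of $H_k$ makes its continuation past $\tau^Z$ a fresh chain, conditionally on the past. Run the symmetric construction with $Y$'s holding times drawn in advance. Under the convention that the surviving coordinate is restarted as an independent fresh chain, both constructions give the same joint law, so $Y$ is a $q^Y$-chain as well. As written, however, this step is asserted rather than proved.
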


\begin{proof}
Let $W = \brb{W(t)}_{t \geq 0}$ be a CTMC with state space $E \times E$ and transition rates $Q^W$ defined in the following way. For any state $(x,y) \in \rho$ and step $\xi \in \Z^d \setminus \cb{0}$, depending on whether three conditions relating $x$, $y$ and $\xi$ are satisfied, consider eight different scenarios -- one of which is in fact unattainable. Each case allows for zero, one, or two possible transitions from $(x,y)$ to a state of the form $(x+\xi,y)$, $(x,y+\xi)$, or $(x+\xi,y+\xi)$, with rates as follows:
\begin{longtable}{r l l}
    \refstepcounter{case}\label{cas:1} \thecase &
    $\begin{cases}
        (x+\xi,y) \in \rho \\
        (x,y+\xi) \in \rho \\
        (x+\xi,y+\xi) \in \rho
    \end{cases}$ & 
    $\begin{array}{l}
        Q^W\brb{(x,y),(x+\xi,y)} = q^X(x,x+\xi), \\
        Q^W\brb{(x,y),(x,y+\xi)} = q^Y(y,y+\xi);
    \end{array}$ \\ \\
    \refstepcounter{case}\label{cas:2} \thecase &
    $\begin{cases}
        (x+\xi,y) \in \rho \\
        (x,y+\xi) \notin \rho \\
        (x+\xi,y+\xi) \in \rho
    \end{cases}$ & 
    $\begin{array}{l}
        Q^W\brb{(x,y),(x+\xi,y)} = q^X(x,x+\xi)-q^Y(y,y+\xi), \\
        Q^W\brb{(x,y),(x+\xi,y+\xi)} = q^Y(y,y+\xi);
    \end{array}$ \\ \\
    \refstepcounter{case}\label{cas:3} \thecase &
    $\begin{cases}
        (x+\xi,y) \in \rho \\
        (x,y+\xi) \notin \rho \\
        (x+\xi,y+\xi) \notin \rho
    \end{cases}$ & 
    $\begin{array}{l}
        Q^W\brb{(x,y),(x+\xi,y)} = q^X(x,x+\xi);
    \end{array}$ \\ \\
    \refstepcounter{case}\label{cas:4} \thecase &
    $\begin{cases}
        (x+\xi,y) \notin \rho \\
        (x,y+\xi) \in \rho \\
        (x+\xi,y+\xi) \in \rho
    \end{cases}$ & 
    $\begin{array}{l}
        Q^W\brb{(x,y),(x,y+\xi)} = q^Y(y,y+\xi)-q^X(x,x+\xi), \\
        Q^W\brb{(x,y),(x+\xi,y+\xi)} = q^X(x,x+\xi);
    \end{array}$ \\ \\
    \refstepcounter{case}\label{cas:5} \thecase &
    $\begin{cases}
        (x+\xi,y) \notin \rho \\
        (x,y+\xi) \in \rho \\
        (x+\xi,y+\xi) \notin \rho
    \end{cases}$ & 
    $\begin{array}{l}
        Q^W\brb{(x,y),(x,y+\xi)} = q^Y(y,y+\xi);
    \end{array}$ \\ \\
    \refstepcounter{case}\label{cas:6} \thecase &
    $\begin{cases}
        (x+\xi,y) \notin \rho \\
        (x,y+\xi) \notin \rho \\
        (x+\xi,y+\xi) \in \rho
    \end{cases}$ & 
    $\begin{array}{l}
        Q^W\brb{(x,y),(x+\xi,y+\xi)} = q^X(x,x+\xi) = q^Y(y,y+\xi);
    \end{array}$ \\ \\
    \refstepcounter{case}\label{cas:7} \thecase &
    $\begin{cases}
        (x+\xi,y) \notin \rho \\
        (x,y+\xi) \notin \rho \\
        (x+\xi,y+\xi) \notin \rho
    \end{cases}$ & 
    $\begin{array}{r}
        \text{\textit{no transition allowed in this case;}}
    \end{array}$ \\ \\
    \refstepcounter{case}\label{cas:8} \thecase &
    $\begin{cases}
        (x+\xi,y) \in \rho \\
        (x,y+\xi) \in \rho \\
        (x+\xi,y+\xi) \notin \rho
    \end{cases}$ & 
    $\begin{array}{l}
        \text{\textit{not possible: by hypothesis \ref{hyp:c}, $x+\xi \notin E$ or $y+\xi \notin E$,}} \\
        \text{\textit{hence either $(x+\xi,y) \notin \rho$ or $(x,y+\xi) \notin \rho$.}}
    \end{array}$
\end{longtable}
Note that hypotheses \ref{hyp:a} and \ref{hyp:b} ensure that the first rates defined in cases \ref{cas:2} and \ref{cas:4} are non-negative and that in case \ref{cas:6} the two alternative definitions coincide. Starting from a state $(x,y) \in \rho$, no other transitions different from those considered above are allowed, i.e.\ all remaining rates are set to zero.

On the other hand, for all $(x,y) \in (E \times E) \setminus \rho$ and $\xi \in \Z^d \setminus \cb{0}$, let
\begin{align*}
    Q^W\brb{(x,y),(x+\xi,y)} & = q^X(x,x+\xi) && \text{if } x+\xi \in E, \\
    Q^W\brb{(x,y),(x,y+\xi)} & = q^Y(y,y+\xi) && \text{if } y+\xi \in E,
\end{align*}
and set to zero all other transition rates from state $(x,y)$.

Observe that $Q^W$ is defined precisely so that it has no transitions out of $\rho$. Hence, under the initial condition $W(0) = \rb{x^0, y^0} \in \rho$, one has
\begin{equation*} 
    \P \brb{W(t) \in \rho \ \ \forall t \in [0, \tau^W)} = 1. 
\end{equation*}

What remains to be proved is that $W(t)$ can be written as $\brb{X(t),Y(t)}$ for all $t \in [0, \tau^W)$, where $X$ and $Y$ are CTMCs with rates $q^X$ and $q^Y$ and such that $\tau^X \wedge \tau^Y = \tau^W$. To do so, it suffices to apply Theorem~\ref{thm:app} in Appendix~\ref{sec:app}, a general result on \emph{lumpability} of (possibly explosive) CTMCs. First, note that the absence of instantaneous states for $Q^W$ follows easily from that of $q^X$ and $q^Y$. Let us then check the validity of the hypotheses \eqref{eq:indy} and \eqref{eq:indx}, the verification of the former being presented in detail below, the latter being completely analogous. The claim is that for all $x, y \in E$ and $\xi \in \Z^d \setminus \cb{0}$ with $x+\xi \in E$,
\begin{equation}
\label{eq:claim}
    \sum_{z' \in E} Q^W\brb{(x,y), (x+\xi,z')} = q^X(x,x+\xi).
\end{equation}

If $(x,y) \in \rho$, depending on the mutual relation of $x$, $y$, and $\xi$, the left-hand side of \eqref{eq:claim} becomes
\begin{longtable}{r l}
    \ref{cas:1} &
    $\begin{aligned}
        Q^W\brb{(x,y),(x+\xi,y)} = q^X(x,x+\xi),
    \end{aligned}$ \\ \\
    \ref{cas:2} &
    $\begin{aligned}
        Q^W\brb{(x,y),(x+\xi,y)} + Q^W\brb{(x,y),(x+\xi,y+\xi)} & \\
        = \bsb{q^X(x,x+\xi) - q^Y(y,y+\xi)} + q^Y(y,y+\xi) & = q^X(x,x+\xi),
    \end{aligned}$ \\ \\
    \ref{cas:3} &
    $\begin{aligned}
        Q^W\brb{(x,y),(x+\xi,y)} = q^X(x,x+\xi),
    \end{aligned}$ \\ \\
    \ref{cas:4} &
    $\begin{aligned}
        Q^W\brb{(x,y),(x+\xi,y+\xi)} = q^X(x,x+\xi),
    \end{aligned}$ \\ \\
    \ref{cas:5} &
    $\begin{aligned}
        \textit{this case can be excluded, as shown below},
    \end{aligned}$ \\ \\
    \ref{cas:6} &
    $\begin{aligned}
        Q^W\brb{(x,y),(x+\xi,y+\xi)} = q^X(x,x+\xi),
    \end{aligned}$ \\ \\
    \ref{cas:7} &
    $\begin{aligned}
        0 = q^X(x,x+\xi), \ \textit{as justified below},
    \end{aligned}$
\end{longtable}
where the rationale for excluding \ref{cas:5} is that in this case we know that $(x+\xi,y+\xi) \notin \rho$ and $(x,y+\xi) \in \rho$, so $(x+\xi,y+\xi) \notin E \times E$ -- by hypothesis \ref{hyp:c} of the theorem -- and $y+\xi \in E$, which are incompatible with our assumption that $x+\xi \in E$. As for the equality in \ref{cas:7}, it holds because also in this case we have $(x+\xi,y+\xi) \notin E \times E$ by hypothesis \ref{hyp:c}, so that $y+\xi \notin E$ (again, recall that $x+\xi \in E$) and thus $\bar{q}^Y(y,y+\xi)=0$; finally, remembering that in case \ref{cas:7} we have $(x+\xi,y) \notin \rho$ and applying hypothesis \ref{hyp:a}, we obtain
\begin{equation*}
    0 \leq q^X(x,x+\xi) = \bar{q}^X(x,x+\xi) \leq \bar{q}^Y(y,y+\xi) = 0.
\end{equation*}

On the other hand, if $(x,y) \notin \rho$, being $x+\xi \in E$, the left-hand side of \eqref{eq:claim} becomes
\begin{equation*}
    Q^W\brb{(x,y),(x+\xi,y)} = q^X(x,x+\xi).
\end{equation*}

We have just shown that $Q^W$ satisfies condition \eqref{eq:claim} for all admissible choices of $x$, $y$, and $\xi$. Thus, it is possible to apply Theorem~\ref{thm:app} and obtain the existence of the two desired CTMCs $X$ and $Y$ with rates $q^X$ and $q^Y$, such that $\brb{X(0), Y(0)} = \rb{x^0, y^0} \in \rho$ and
\begin{equation*}
    \P \Brb{\brb{X(t),Y(t)} \in \rho \ \ \forall t \in [0, \tau^X \wedge \tau^Y)} = 1.
\end{equation*}
\end{proof}

Let us now see a simple one-dimensional example in which this theorem can be applied to two MAK SRNs with a common underlying reaction network and two different choices of rate constants. They will be compared  by means of a binary relation $\rho$ that is not of the form $\preord{M}$, and is not even a preorder.

\vskip 10 pt

\begin{example}
\label{ex:pcc}
Consider the reaction network and associated MAKs $K^X$ and $K^Y$ depicted in Figure~\ref{fig:one}, giving rise to two CTMCs $X$ and $Y$ counting the molecules of A in the two different regimes. We want to compare $X$ and $Y$ with respect to some binary relation $\rho$ defined on their common state space $\nat$, using Theorem~\ref{thm:pcc}.

\begin{figure}[htbp]
    \begin{center}
        \vskip 8 pt
        \begin{tikzpicture}
            \node[state] (0)    at (0,0)    {$\text{0}$}; 
            \node[state] (A)    at (3,0)    {$\text{A}$}; 
            \node[state] (2A)   at (6,0)    {$2\text{A}$};
            \path[-{Stealth}]
            (0)     edge["{$\kappa_{0 \to \text{A}}^X, \kappa_{0 \to \text{A}}^Y$}"]    node{} (A)
            (A)     edge["{$\kappa_{\text{A} \to 2\text{A}}^X, \kappa_{\text{A} \to 2\text{A}}^Y$}"]     node{} (2A)
            (2A)    edge["{$\kappa_{2\text{A} \to 0}^X, \kappa_{2\text{A} \to 0}^Y$}"][bend left = 30]    node{} (0);
        \end{tikzpicture}
    \end{center}
\caption{a one-dimensional reaction network and two different associated sets of rate constants $K^X$ and $K^Y$.}
\label{fig:one}
\end{figure}
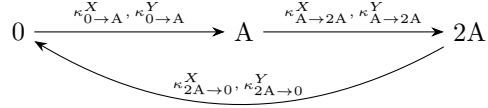
    
The reaction vectors are
\begin{equation*}
    \xi_{0 \to \text{A}} = \xi_{\text{A} \to 2\text{A}} = 1 \ \ \ \ \ \text{and} \ \ \ \ \ \xi_{2\text{A} \to 0} = -2,
\end{equation*}
while the corresponding transition rates and propensities associated with $X$ are
\begin{align*}
    q^X(x, x+1) & = \lambda_{0 \to \text{A}}^X(x) + \lambda_{\text{A} \to 2\text{A}}^X(x) = \kappa_{0 \to \text{A}}^X + \kappa_{\text{A} \to 2\text{A}}^X x \ \ \ \text{and} \\
    q^X(x, x-2) & = \lambda_{2\text{A} \to 0}^X(x) = \kappa_{2\text{A} \to 0}^X x(x-1),
\end{align*}
and similarly for $Y$. Let $\rho \subseteq \nat \times \nat$ be the binary relation
\begin{equation*}
    (x,y) \in \rho \ \ \text{if and only if} \ \ x \leq y+1,
\end{equation*}
which resembles $\preord{1}$, the matrix preorder determined by $1 \in \R^{1,1}$ (i.e.\ the usual order in $\nat$), except for the $+1$ constant that prevents it from being of the form $\preord{M}$ -- and, more generally, from satisfying the transitive property. We cannot simply use $\preord{1}$ due to boundary effects. Heuristically, if the reaction $2\text{A} \to 0$ fires in process $Y$ when $\brb{X(t),Y(t)} = \rb{1,2}$, it is impossible for $X$ to decrease at the same time, hence the state $\rb{1,0}$ is visited and $\preord{1}$ is not preserved. As a matter of fact, it can be verified that $\preord{1}$ fails to satisfy the hypotheses of Theorem~\ref{thm:pcc}. We therefore proceed with $\rho$ and check whether, and for which values of $\rb{\kappa_{0 \to \text{A}}^X, \kappa_{\text{A} \to 2\text{A}}^X, \kappa_{2\text{A} \to 0}^X, \kappa_{0 \to \text{A}}^Y, \kappa_{\text{A} \to 2\text{A}}^Y, \kappa_{2\text{A} \to 0}^Y}$, the objects defined above satisfy conditions \ref{hyp:a} and \ref{hyp:b} of Theorem~\ref{thm:pcc} for all choices of $(x,y) \in \rho$ and steps $\xi$ (the validity of \ref{hyp:c} is natural in this case). Starting from $\xi = 1$:
\begin{itemize}
    \item [\ref{hyp:a}] $(x,y) \in \rho$ and $(x+1,y) \notin \rho$ means that $x \leq y+1 < x+1$, hence $x=y+1$. Under this assumption, $\bar{q}^X(x, x+1) \leq \bar{q}^Y(y, y+1)$ must be true, so
    \begin{equation}
    \label{eq:kon}
        \kappa_{0 \to \text{A}}^X + \kappa_{\text{A} \to 2\text{A}}^X (y+1) \leq \kappa_{0 \to \text{A}}^Y + \kappa_{\text{A} \to 2\text{A}}^Y y \ \ \ \ \forall y \in \nat;
    \end{equation}
    \item [\ref{hyp:b}] $(x,y) \in \rho$ and $(x,y+1) \notin \rho$ means that $y+2 < x \leq y+1$, which is not possible. We do not get additional constraints on the rate constants from here.
\end{itemize}

As for $\xi = -2$, conditions \ref{hyp:a} and \ref{hyp:b} take the following form:
\begin{itemize}
    \item [\ref{hyp:a}] $(x,y) \in \rho$ and $(x-2,y) \notin \rho$ means that $x \leq y+1 < x-2$, which is not possible;
    \item [\ref{hyp:b}] $(x,y) \in \rho$ and $(x,y-2) \notin \rho$ means that $y-1 < x \leq y+1$, which is satisfied either when $x = y$ or $x = y+1$. In both cases, we must make sure that $\bar{q}^X(x, x-2) \geq \bar{q}^Y(y, y-2)$ is satisfied, obtaining
    \begin{equation}
    \label{eq:ktw}
        \begin{cases}
            \kappa_{2\text{A} \to 0}^X y(y-1) \geq \kappa_{2\text{A} \to 0}^Y y(y-1) \\
            \kappa_{2\text{A} \to 0}^X (y+1)y \geq \kappa_{2\text{A} \to 0}^Y y(y-1)
        \end{cases}
        \forall y \in \nat.
    \end{equation}
\end{itemize}

For any other $\xi \in \Z$, we have that $\bar{q}^X(x, x+\xi) = \bar{q}^Y(y, y+\xi) = 0$, so hypotheses \ref{hyp:a} and \ref{hyp:b} are automatically satisfied. If we now put \eqref{eq:kon} and \eqref{eq:ktw} together, considering that the second inequality in \eqref{eq:ktw} is weaker than the first one and can be ignored, we are left with
\begin{equation*}
    \begin{cases}
        \kappa_{0 \to \text{A}}^X + \kappa_{\text{A} \to 2\text{A}}^X + \kappa_{\text{A} \to 2\text{A}}^X y \leq \kappa_{0 \to \text{A}}^Y + \kappa_{\text{A} \to 2\text{A}}^Y y \\ 
        \kappa_{2\text{A} \to 0}^X y(y-1) \geq \kappa_{2\text{A} \to 0}^Y y(y-1)
    \end{cases}
    \forall y \in \nat,
\end{equation*}
and such conditions are certainly met when
\begin{equation*}
    \begin{cases}
        \kappa_{0 \to \text{A}}^X + \kappa_{\text{A} \to 2\text{A}}^X \leq \kappa_{0 \to \text{A}}^Y \\
        \kappa_{\text{A} \to 2\text{A}}^X \leq \kappa_{\text{A} \to 2\text{A}}^Y \\
        \kappa_{2\text{A} \to 0}^X \geq \kappa_{2\text{A} \to 0}^Y.
    \end{cases}
\end{equation*}

Under these constraints, by applying Theorem~\ref{thm:pcc}, we know that the corresponding CTMCs $X$ and $Y$ can be constructed on the same probability space, in such a way that
\begin{equation*}
    \P \brb{X(t) \leq Y(t) + 1 \ \ \forall t \geq 0} = 1.
\end{equation*}
Note that in this case the comparison of the two processes is valid for all times, since $X$ and $Y$ are non-explosive. This property holds independently of the values of the rate constants, as all the propensities associated with positive reaction vectors are at most linear in the state variables.

Even though $\rho$ is not a preorder, it still yields a useful bound on $X$ relative to $Y$. In many situations, this bound is as effective as one would obtain using $\preord{1}$. For instance, it allows one to transfer from $Y$ to $X$ the possible tightness of the family of time-$t$ distributions. Example~\ref{ex:erg} illustrates the practical utility of this transfer property.
\end{example}

\section{Linear sufficient conditions for SRNs}

The main drawback of Theorem~\ref{thm:pcc} is that its hypotheses require checking a set of conditions for every possible choice of $x$, $y$, and $\xi$ -- a potentially infinite number of cases. As just shown in Example~\ref{ex:pcc}, there are instances in which these checks are computationally feasible, but in many cases they are not. In what follows, we will focus on the application of Theorem~\ref{thm:pcc} to MAK SRNs. Our aim is to derive conditions that are easier to verify, at least computationally. We achieve this by proving Theorem~\ref{thm:lsc}, where the assumptions are formulated as a finite set of linear constraints and the binary relation $\rho$ is replaced by a more tractable matrix preorder. We also provide code in Section~\ref{sub:imp} to check these constraints efficiently. Together, these contributions make our result particularly useful from a practical point of view.

\subsection{Theoretical result}

In the following theorem, we refer to a reaction network $\rn$ with all its components and features as discussed and named in Section~\ref{sub:srn}. In particular, recall that a specific mass-action kinetics is identified with its collection of rate constants $K = \rb{\kappa_r}_{r \in \rct}$ and that we denote by $C$ a matrix displaying in its rows a basis of the subspace of $\R^d$ spanned by the conservation laws associated with $\rn$. 

\vskip 10 pt

\begin{theorem}
\label{thm:lsc}
    Consider a reaction network $\rn$ with two associated mass-action kinetics $K^X$ and $K^Y$. Choose $M \in \Z^{m,d}$ and define the matrix
    \begin{equation*}
        D = \brb{M^\intercal | C^\intercal} \in \R^{d,m+d-s}.
    \end{equation*}
    
    Let $A,B \in \R^{m+1,d}$ be filled in as follows, for $i = 1,\dots,m+1$ and $j=1,\dots,d$:
    \begin{equation*}
        A_{i,j} = 
        \begin{cases}
            1 & \text{if} \ \exists \alpha \in \R^{m+d-s} \ \text{such that} \ D\alpha = e_j \ \text{and} \ \alpha_k \geq 0 \ \forall k \in \{1,\dots,m\} \setminus \{i\}, \\
            0 & \text{otherwise};
        \end{cases}
    \end{equation*}
    \begin{equation*}
        B_{i,j} = 
        \begin{cases}
            1 & \text{if} \ \exists \beta \in \R^{m+d-s} \ \text{such that} \ D\beta = e_j \ \text{and} \ \beta_k \leq 0 \ \forall k \in \{1,\dots,m\} \setminus \{i\}, \\
            0 & \text{otherwise}.
        \end{cases}
    \end{equation*}
    
    Now, suppose that each reaction $r \in \rct$ satisfies one of

    \begin{tabular}{r l c r c l c l}
        \refstepcounter{hypa}\label{hyp:a1} \thehypa & $(M\xi_r)^+ = 0$, & & & & & & \\
        \refstepcounter{hypa}\label{hyp:a2} \thehypa & & & $A_{m+1,*} \odot \nu_r = \nu_r$, & & $\kappa_r^X \leq \kappa_r^Y$, & & \\
        \refstepcounter{hypa}\label{hyp:a3} \thehypa & $(M\xi_r)^+ = e_i$, & & $A_{i,*} \odot \nu_r = \nu_r$, & & $\kappa_r^X \leq \kappa_r^Y$ & & for some $i \in \cb{1,\dots,m}$,
    \end{tabular}
    
    and one of
    
    \begin{tabular}{r l c r c l c l}
        \refstepcounter{hypb}\label{hyp:b1} \thehypb & $(M\xi_r)^- = 0$, & & & & & & \\
        \refstepcounter{hypb}\label{hyp:b2} \thehypb & & & $B_{m+1,*} \odot \nu_r = \nu_r$, & & $\kappa_r^X \geq \kappa_r^Y$, & & \\
        \refstepcounter{hypb}\label{hyp:b3} \thehypb & $(M\xi_r)^- = e_i$, & & $B_{i,*} \odot \nu_r = \nu_r$, & & $\kappa_r^X \geq \kappa_r^Y$ & & for some $i \in \cb{1,\dots,m}$.
    \end{tabular}

    Then, for each choice of $x^0, y^0 \in \nat^d$ such that $x^0-y^0 \in \ss$ and $x^0 \preord{M} y^0$, there exist two CTMCs $X$ and $Y$ modeling $\rn$ with kinetics $K^X$ and $K^Y$, defined on a common probability space, and such that $X(0) = x^0$, $Y(0) = y^0$ and
    \begin{equation*}
        \P\brb{X(t) \preord{M} Y(t) \ \ \forall t \in [0, \tau^X \wedge \tau^Y)} = 1.
    \end{equation*}
\end{theorem}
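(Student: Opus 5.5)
The plan is to deduce the statement from Theorem~\ref{thm:pcc}, applied with a common state space and a matrix-preorder relation. For the state space I take the stoichiometric compatibility class $E = (x^0+\ss)\cap\nat^d$. It contains $y^0$ because $x^0-y^0\in\ss$, and both mass-action chains stay in it. I set $q^X,q^Y$ to be the rates \eqref{eq:rma} with propensities \eqref{eq:mak}, and
\[
\rho=\{(x,y)\in E\times E \st M(x-y)\le 0\}.
\]
Then $(x^0,y^0)\in\rho$, and Theorem~\ref{thm:pcc} gives exactly the claimed coupling. It remains to verify hypotheses \ref{hyp:a}, \ref{hyp:b} and \ref{hyp:c}.

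Hypothesis \ref{hyp:c} is immediate. The pairs $(x+\xi,y+\xi)$ and $(x,y)$ have the same difference, so if $(x+\xi,y+\xi)\in E\times E$ it lies in $\rho$.

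For \ref{hyp:a}, fix $(x,y)\in\rho$ and $\xi$ with $(x+\xi,y)\notin\rho$, and assume $\bar q^X(x,x+\xi)>0$ (otherwise there is nothing to prove). Then $x+\xi\in E$, and so the failure of membership means $M_{i,*}(x-y)+(M\xi)_i>0$ for some $i$. It suffices to show $\lambda^X_r(x)\le\lambda^Y_r(y)$ for every reaction $r$ with $\xi_r=\xi$, and then sum over these $r$. Note also that $\bar q^Y(y,y+\xi)=q^Y(y,y+\xi)$ whenever it is positive: a positive mass-action propensity implies $y+\xi_r\ge 0$, and $y+\xi_r\in x^0+\ss$ automatically.

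Every such $r$ has $(M\xi_r)^+\neq0$, so \ref{hyp:a1} is excluded.
\begin{itemize}
\item Under \ref{hyp:a3}, $(M\xi)^+=e_i$, so only row $i$ can be violated. Since $M$ has integer entries and $x,y$ are integer vectors, $M_{i,*}(x-y)$ is a non-positive integer with $M_{i,*}(x-y)+1>0$. Hence $M_{i,*}(x-y)=0$.
\item Thus $x-y$ satisfies $M_{k,*}(x-y)\le0$ for $k\ne i$, $M_{i,*}(x-y)=0$, and $C(x-y)=0$ (because $x-y\in\ss=\ker C$). For any $j$ with $A_{i,j}=1$, pick $\alpha$ with $D\alpha=e_j$ and $\alpha_k\ge0$ for $k\neq i$. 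Then
\[
x_j-y_j=\alpha^\intercal D^\intercal(x-y)=\sum_{k}\alpha_k M_{k,*}(x-y)+\sum_{\ell}\alpha_{m+\ell}\,C_{\ell,*}(x-y)\le 0 .
\]
\item Under \ref{hyp:a2}, the same computation works with all $\alpha_k\ge0$ and needs no integrality step.
\end{itemize}
In both cases $x_j\le y_j$ for all $j\in\mathrm{supp}(\nu_r)$, because $A_{\cdot,*}\odot\nu_r=\nu_r$. The falling factorial $(\cdot)_n$ is non-decreasing on $\nat$: it vanishes below $n$ and is a product of non-negative increasing factors above. Combined with $\kappa^X_r\le\kappa^Y_r$, this gives $\lambda^X_r(x)\le\lambda^Y_r(y)$.

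Hypothesis \ref{hyp:b} is symmetric. If $(x,y+\xi)\notin\rho$, then some row has $M_{i,*}(x-y)-(M\xi)_i>0$, so $(M\xi)^-\ne0$ and \ref{hyp:b1} is excluded. The same integrality argument gives $M_{i,*}(x-y)=0$ under \ref{hyp:b3}. Using $\beta$ with $\beta_k\le0$ then yields $x_j-y_j=\beta^\intercal D^\intercal(x-y)\ge0$ on $\mathrm{supp}(\nu_r)$, hence $\lambda^X_r(x)\ge\lambda^Y_r(y)$. In the boundary case where $y+\xi\notin E$, the propensity $\lambda^Y_r(y)$ vanishes and the inequality is trivial.

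The main obstacle is the case analysis in \ref{hyp:a} and \ref{hyp:b}. The conditions on $r$ are stated reaction-by-reaction, while Theorem~\ref{thm:pcc} compares total rates along a step $\xi$. The key points are that all reactions sharing $\xi$ also share $(M\xi)^\pm$, which is what excludes \ref{hyp:a1}/\ref{hyp:b1} uniformly, and that the integrality of $M$ pins the violated row to $M_{i,*}(x-y)=0$. That is exactly what allows the free sign on $\alpha_i$ (respectively $\beta_i$) in the definitions of $A$ and $B$.
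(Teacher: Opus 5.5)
Your proposal is correct and follows the paper's proof essentially step by step. You apply Theorem~\ref{thm:pcc} on the stoichiometric compatibility class with $\rho$ the restriction of $\preord{M}$, note that \ref{hyp:c} is trivial, and reduce \ref{hyp:a}--\ref{hyp:b} to per-reaction propensity inequalities via the certificates $\alpha,\beta$ defining $A,B$, with integrality of $M$ forcing $\langle M_{i,*},x-y\rangle=0$ under \ref{hyp:a3}/\ref{hyp:b3}. You also handle explicitly some details the paper leaves implicit: the cases $x+\xi\notin E$ and $y+\xi\notin E$, the fact that a positive propensity keeps the jump inside $E$, and the monotonicity of falling factorials.
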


\vskip 10 pt

\begin{remark}
\label{rem:dif}
    Even though Theorem~\ref{thm:lsc} is stated for a single reaction network with two choices of rate constants, it also allows us to compare two structurally different SRNs, with reaction sets $\rct_1 \neq \rct_2$. In that case, we consider $\rn$ of Theorem~\ref{thm:lsc} as the network with reactions $\rct = \rct_1 \cup \rct_2$. This approach works because several hypotheses of Theorem~\ref{thm:lsc} involve inequalities between homologous rate constants: since rate constants are non‑negative, any such inequality is automatically satisfied if we set its smaller term to zero. Doing so is essentially equivalent to \emph{removing} the corresponding reaction in model $X$ but not necessarily $Y$ (if the inequality comes from \ref{hyp:a2} or \ref{hyp:a3}), or in model $Y$ but not necessarily $X$ (if the inequality comes from \ref{hyp:b2} or \ref{hyp:b3}). More details on this strategy and its applications can be found in Examples~\ref{ex:lkv} and \ref{ex:erg}.
\end{remark}

\begin{proof}[Proof of Theorem~\ref{thm:lsc}]
    In order to prove this result, for each $x^0 \in \nat^d$ temporarily fixed, we will apply our previous Theorem~\ref{thm:pcc} choosing as
    \begin{itemize}
        \item $E$: the stoichiometric compatibility class $\rb{x^0 + \ss} \cap \nat^d$,
        \item $q^X, q^Y$: the rate matrices defined on $E \times E$ and computed according to the mass-action kinetics $K^X$ and $K^Y$ (extended to $\bar{q}^X$ and $\bar{q}^Y$ as in Theorem~\ref{thm:pcc}),
        \item $\rho$: the restriction of the preorder $\preord{M}$ to $E$.
    \end{itemize}
    
    Let $x,y \in E$ be such that $x \preord{M} y$. According to the statement of Theorem~\ref{thm:pcc}, we should verify that for all $\xi \in \Z^d \setminus \cb{0}$
    \begin{itemize}
        \item [\ref{hyp:a}] $x+\xi \npreord{M} y \implies \bar{q}^X(x, x+\xi) \leq \bar{q}^Y(y, y+\xi)$,
        \item [\ref{hyp:b}] $x \npreord{M} y+\xi \implies \bar{q}^X(x, x+\xi) \geq \bar{q}^Y(y, y+\xi)$,
        \item [\ref{hyp:c}] $x+\xi \npreord{M} y+\xi \implies x+\xi \notin E \ \ \text{or} \ \ y+\xi \notin E$.
    \end{itemize}

    Note that \ref{hyp:c} is automatically satisfied due to the definition of $\preord{M}$, independently of $\xi$. For what concerns \ref{hyp:a} and \ref{hyp:b}, let us focus on a single reaction $r \in \rct$ and show that
    \begin{enumerate}[label=\textup{(}\textit{\alph*}*\textup{)}]
        \item $x+\xi_r \npreord{M} y \implies \lambda_r^X(x) \leq \lambda_r^Y(y)$,
        \label{hyp:a*}
        \item $x \npreord{M} y+\xi_r \implies \lambda_r^X(x) \geq \lambda_r^Y(y)$.
        \label{hyp:b*}
    \end{enumerate}

    If \ref{hyp:a*} and \ref{hyp:b*} are true for each $r$, conditions \ref{hyp:a} and \ref{hyp:b} hold for all $\xi \in \Z^d$ as well, as a consequence of the identities 
    \begin{equation*}
        \bar{q}^X(x, x+\xi) = \sum\limits_{\substack{r \in \rct \\ \xi_r = \xi}} \lambda_r^X(x), \ \ \ \ \ \ \ \ \ \ \ \ \ \ \ \bar{q}^Y(y, y+\xi) = \sum\limits_{\substack{r \in \rct \\ \xi_r = \xi}} \lambda_r^Y(y).
    \end{equation*}

    Recall now, as introduced in \eqref{eq:mak}, that the explicit form of the MAK rate functions appearing in \ref{hyp:a*} and \ref{hyp:b*} is
    \begin{equation}
    \label{eq:rts}
        \lambda_r^X(x) = \kappa_r^X \prod_{j \in \text{supp}(\nu_r)} (x_j)_{\nu_{r,j}}, \ \ \ \ \ \ \ \ \ \ \ \ \ \ \ \lambda_r^Y(y) = \kappa_r^Y \prod_{j \in \text{supp}(\nu_r)} (y_j)_{\nu_{r,j}},
    \end{equation}
    and let us show that \ref{hyp:a*} is true whenever any of the assumptions \ref{hyp:a1}, \ref{hyp:a2} or \ref{hyp:a3} reported in the theorem statement holds. The verification for \ref{hyp:b*} is completely analogous.
    
    \begin{itemize}
        \item [\ref{hyp:a1}] The hypothesis $(M\xi_r)^+ = 0$ is equivalent to $M\xi_r \leq 0$, so $x+\xi_r \npreord{M} y$ cannot be true while $x \preord{M} y$, because under such assumption
        \begin{equation*}
            M(x+\xi_r-y) = M(x-y) + M\xi_r \leq 0.
        \end{equation*}
        This means that the antecedent of the implication is false and therefore \ref{hyp:a*} is verified.
        
        \item [\ref{hyp:a2}] The hypothesis $A_{m+1,*} \odot \nu_r = \nu_r$ means that $A_{m+1,j} = 1$ for all $j \in \text{supp}(\nu_r)$, that is 
        \begin{equation*}
            \forall j \in \text{supp}(\nu_r) \ \exists \alpha \in \R_{\geq 0}^m \times \R^{d-s} : D\alpha = e_j.
        \end{equation*}
        
        Let us fix $j \in \text{supp}(\nu_r)$ and consider the above mentioned $\alpha$, satisfying
        \begin{equation*}
            \begin{cases}
                \alpha_1 M_{1,*}+\dots+\alpha_m M_{m,*}+\alpha_{m+1} C_{1,*}+\dots+\alpha_{m+d-s} C_{d-s,*} = e_j, \\
                \alpha_1,\dots,\alpha_m \geq 0.
            \end{cases}
        \end{equation*}
        
        Now -- by definition of $x \preord{M} y$ -- we have $M(x-y) \leq 0$, that is
        \begin{equation*}
            \langle M_{k,*}, x-y \rangle \leq 0 \ \ \ k = 1,\dots,m,
        \end{equation*}
        so that
        \begin{equation*}
            \langle e_j, x-y \rangle = \sum_{k=1}^m \underbrace{\alpha_k}_{\geq 0} \underbrace{\langle M_{k,*}, x-y \rangle}_{\leq 0} + \sum_{k=1}^{d-s} \alpha_{m+k} \underbrace{\langle \overbrace{C_{k,*}}^{\in \ss^\perp}, \overbrace{x-y}^{\in \ss} \rangle}_{= 0} \leq 0,
        \end{equation*}
        i.e.\ $x_j \leq y_j$.
        
        This holds for every $j \in \text{supp}(\nu_r)$, so -- together with \eqref{eq:rts} and the fact that $\kappa_r^X \leq \kappa_r^Y$ (the second part of \ref{hyp:a2}) -- it implies
        \begin{equation*}
            \lambda_r^X(x) \leq \lambda_r^Y(y),
        \end{equation*}
        and \ref{hyp:a*} is satisfied. 
        
        \item [\ref{hyp:a3}] The hypothesis $A_{i,*} \odot \nu_r = \nu_r$ means that $A_{i,j} = 1$ for all $j \in \text{supp}(\nu_r)$, that is 
        \begin{equation*}
            \forall j \in \text{supp}(\nu_r) \ \exists \alpha \in \R_{\geq 0}^{i-1} \times \R \times \R_{\geq 0}^{m-i} \times \R^{d-s} : D\alpha = e_j.
        \end{equation*}
        
        As in the previous case, fix $j \in \text{supp}(\nu_r)$ and consider the above mentioned $\alpha$, satisfying
        \begin{equation*}
            \begin{cases}
                \alpha_1 M_{1,*}+\dots+\alpha_m M_{m,*}+\alpha_{m+1} C_{1,*}+\dots+\alpha_{m+d-s} C_{d-s,*} = e_j, \\
                \alpha_1,\dots,\alpha_{i-1},\alpha_{i+1},\dots,\alpha_m \geq 0.
            \end{cases}
        \end{equation*}
        
        Since in this case the sign of $\alpha_i$ is unknown, we need $\langle M_{i,*}, x-y \rangle$ to vanish if we want to prove $x_j \leq y_j$ in a similar manner as before. In order to show that $\langle M_{i,*}, x-y \rangle = 0$, we will exploit the fact that
        \begin{equation*}
            \begin{cases}
                x \preord{M} y, \\
                x+\xi_r \npreord{M} y,
            \end{cases}
        \end{equation*}
        and use the assumption $\rb{M\xi_r}^+ = e_i$ of \ref{hyp:a3}. By definition, $x+\xi_r \npreord{M} y$ means that $M(x+\xi_r-y) \nleq 0$, so there exists some $k \in \{1,\dots,m\}$ such that
        \begin{equation}
        \label{eq:npo}
            0 < \langle M_{k,*}, x+\xi_r-y \rangle = \langle M_{k,*}, x-y \rangle + \langle M_{k,*}, \xi_r \rangle.
        \end{equation}
        This is not possible when $k \neq i$, because for such values of $k$ 
        \begin{equation*}
            \langle M_{k,*}, \xi_r \rangle \leq {\langle M_{k,*}, \xi_r \rangle}^+ = 0,
        \end{equation*}
        and this, combined with \eqref{eq:npo}, would contradict the fact that $x \preord{M} y$. Hence, it must be true when $k=i$, in which case
        \begin{equation*}
            \langle M_{i,*}, \xi_r \rangle = {\langle M_{i,*}, \xi_r \rangle}^+ = 1,
        \end{equation*}
        and so from \eqref{eq:npo} we get
        \begin{equation*}
            -1 < \langle M_{i,*}, x-y \rangle.
        \end{equation*}
        
        We also know that
        \begin{description} 
            \item[] $\langle M_{i,*}, x-y \rangle \leq 0$ because $x \preord{M} y$,
            \item[] $\langle M_{i,*}, x-y \rangle \in \Z$ because both $M_{i,*}$ and $x-y$ belong to $\Z^d$,
        \end{description}
        so we can conclude that
        \begin{equation*}
            \langle M_{i,*}, x-y \rangle = 0,
        \end{equation*}
        as desired. We are finally able to say that
        \begin{equation*}
            \langle e_j, x-y \rangle = \sum\limits_{\substack{k=1 \\ k \neq i}}^m \underbrace{\alpha_k}_{\geq 0} \underbrace{\langle M_{k,*}, x-y \rangle}_{\leq 0} + \alpha_i \underbrace{\langle M_{i,*}, x-y \rangle}_{=0} + \sum_{k=1}^{d-s} \alpha_{m+k} \underbrace{\langle \overbrace{C_{k,*}}^{\in \ss^\perp}, \overbrace{x-y}^{\in \ss} \rangle}_{=0} \leq 0.
        \end{equation*}
        
        As in the \ref{hyp:a2} case, this -- together with the remaining hypothesis $\kappa_r^X \leq \kappa_r^Y$ and the form of the rates described by \eqref{eq:rts} -- is what we needed to guarantee the validity of \ref{hyp:a*}.
    \end{itemize}
    
    The hypotheses of Theorem~\ref{thm:pcc} are therefore all verified: we can apply it once for every stoichiometric compatibility class (determined by $x^0$) associated with $\rn$ and get our thesis.
\end{proof}

\vskip 10 pt

\begin{remark}
    As anticipated in Section~\ref{sub:not}, multiple matrices can induce the same preorder. In addition, when applying Theorem~\ref{thm:lsc} to a reaction network $\rn$, we deduce the existence of two comparable CTMCs lying in a fixed stoichiometric compatibility class $\rb{x^0 + \ss} \cap \nat^d$, so we are only interested in the meaning of the preorder $\preord{M}$ when restricted to $\rb{x^0 + \ss} \cap \nat^d$. Consequently, as long as $M' \in \Z^{m',d}$ is such that $M' \sim_{\ss} M$ (and hence $M' \sim_{\rb{x^0 + \ss} \cap \nat^d} M$ for all choices of $x^0$, see definitions \eqref{eq:preord} and \eqref{eq:equiv}), we are free to replace $M$ with $M'$ without altering the thesis of Theorem~\ref{thm:lsc}. However, notice that an alternative choice of the matrix $M$ can surely affect hypotheses \ref{hyp:a3} and \ref{hyp:b3} -- potentially invalidating them when they were previously true or vice versa -- so it is of fundamental importance to carefully select the right representative of the equivalence class of matrices associated with the desired preorder. Here are two recommendations on how to choose the matrix $M$:
    \begin{enumerate}
        \item $\text{GCD}(M_{i,*}) = 1$ for each $i = 1,\dots,m$, that is, every row of $M$ must be made of coprime numbers. The reason for this is that the $i$-th component of $M \xi_r$ -- which we would possibly want to be either $0$ or $\pm 1$, to have a chance of \ref{hyp:a3} or \ref{hyp:b3} being verified -- is necessarily a multiple of $\text{GCD}(M_{i,*})$. Requiring the matrix $M$ to meet such condition is not restricting whatsoever, since each row of any candidate $M$ can always be divided by its $\text{GCD}$ for this purpose, without any effect on $\preord{M}$;
        \item For the same reason described above (making possible for $(M\xi_r)^+$ and $(M\xi_r)^-$ to be made of all $0$ except for a $1$), $M$ should not contain any ``extra information''. More precisely, if one of the rows of $M$ can be written as a positive linear combination of the others up to a conservation law (i.e.\ up to a linear combination of the rows of $C$), it is not really modifying the preorder $\preord{M}$ on the stoichiometric compatibility classes, but, on the other hand, it could prevent \ref{hyp:a3} or \ref{hyp:b3} from being true. Suppose for example that $M_{1,*}\xi_r = 1$, $M_{2,*}\xi_r = 0$ and $M_{3,*} = M_{1,*} + 4 M_{2,*}$: then $M_{3,*}\xi_r = 1 + 4 \cdot 0 = 1$, so $\rb{M\xi_r}^+ = \rb{1,0,1,\dots}$ cannot satisfy \ref{hyp:a3}. At the same time, if $x$ and $y$ are such that $M_{1,*}(x-y) \leq 0$ and $M_{2,*}(x-y) \leq 0$, then also $M_{3,*}(x-y) \leq 0$ is automatically true. This means that the row $M_{3,*}$ is redundant in the definition of $\preord{M}$ and can (in this case should) be removed.
    \end{enumerate}
\end{remark}

\subsection{Implementation}
\label{sub:imp}

The hypotheses of Theorem~\ref{thm:lsc} are algorithmic by nature, and can be easily translated into code performing an efficient check for a given reaction network $\rn$ and an integer-valued matrix $M$, returning as output
\begin{itemize}
    \item whether it is possible to associate two different mass-action kinetics $K^X$ and $K^Y$ to $\rn$ and build the corresponding Markov chains $X$ and $Y$ so that they will be ordered with respect to $\preord{M}$ for all times (up to the first explosion),
    \item if it is possible, under which conditions on $K^X$ and $K^Y$ -- in the form of a set of equalities and inequalities between homologous rate constants.
\end{itemize}
Such a program is by itself a valuable tool, making \cite[Theorem~3.1]{campos2023comparison} now readily applicable. However, its real strength is that, by automating and accelerating the check of all hypotheses, it enables someone interested in studying a given reaction network $\rn$ to repeat the checks for every ``simple enough'' matrix, without having to choose one \textit{a priori}. By doing so, the output becomes
\begin{itemize}
    \item the set of ``simple'' matrix preorders for which it is possible to find conditions on $K^X$ and $K^Y$ so that $X$ and $Y$ can be compared,
    \item for each preorder, the corresponding conditions on the rate constants.
\end{itemize}

What has just been described are the input and output of the two main functions that can be found in the code that we wrote and published at the following link:

\href{https://github.com/giulio-cuniberti/comparing-srns}{\ttfamily https://github.com/giulio-cuniberti/comparing-srns}

The code was written relying on existing libraries for Linear Programming (LP). That was possible because the solution of linear systems subject to inequalities as described in the statement of Theorem~\ref{thm:lsc} (precisely in the definition of the matrices $A$ and $B$) can be interpreted as a feasibility problem in the LP framework. This is very convenient: algorithms for LP problems have been designed and optimized for decades and are extremely efficient and fast. Notice, in particular, that the computation of the entries of $A$ and $B$ requires solving multiple linear systems that all share the same coefficient matrix (for a fixed $M$). Moreover, the solution set of each one of these systems is then intersected with $m+1$ different systems of inequalities to compute $m+1$ distinct entries. This double redundancy can be easily exploited by LP methods to reduce the number of total necessary computations when looking for compatible preorders using a brute-force approach. It is sufficient to solve the resulting LP problems in the right sequence, grouping them first by the coefficient matrix and then by the right-hand side of the linear system, and modifying only the inequality constraints in most steps. Alternatively (or in conjunction), depending on the number of processors available, the calculation of $A$ and $B$ can be accelerated even further by parallelizing the algorithm.

Regarding the above description of the second main function of our code -- searching for suitable preorders for a given reaction network -- we specify that we chose to consider as ``simple'' matrices (and associated preorders) those whose set of rows is contained in $\cb{e_1,\dots,e_d,-e_1,\dots,-e_d}$. This results in all species counts being compared separately from one another between the two processes: some will be larger in $X$, some in $Y$, some will be equal (when both $e_j$ and $-e_j$ are present in $M$ for the corresponding $j$) and some others will not be compared. All possible combinations of this kind are explored by our code. Once again, let us emphasize the inherent parallelism of the algorithm: for every considered matrix, the corresponding computations and checks to be performed are similar but mutually independent, so that the search can be carried out along multiple parallel branches. When many computational units are available, this can greatly accelerate the execution of the code.

\subsection{Examples}

Here are several examples of reaction networks $\rn$ for which it is possible to apply Theorem~\ref{thm:lsc} and build two comparable MAK SRNs both based on $\rn$ but with different choices of rate constants. From an intuitive and practical standpoint, this construction tells us how species counts are most likely to change when modifying certain parameters of the MAK model. The examples were found by running our code on a selection of classical reaction networks, frequently studied and discussed in the literature. Interestingly, several networks admit multiple different \textbf{preordering structures} -- i.e.\ systems of rate-constant inequalities that allow the almost-sure preservation of a specified preorder on the species counts. Note that each structure we will present has a trivial symmetric counterpart in which all rate-constant and species-count inequalities are reversed: those will be ignored, since they provide no additional understanding of the considered reaction network. To express at a glance the rate-constant and species-count inequalities in our examples, the following color scheme will be used:
\begin{itemize}
    \item \textbf{green} for a \textit{reaction} whose rate constant in model $Y$ is assumed to be greater than or equal to the one in model $X$, or for a \textit{species} whose number of molecules in model $Y$ is greater than or equal to that in model $X$ (at time $0$ by hypothesis, and thereafter by Theorem~\ref{thm:lsc});
    \item \textbf{red} for a \textit{reaction} whose rate constant in model $Y$ is assumed to be less than or equal to the one in model $X$, or for a \textit{species} whose number of molecules in model $Y$ is less than or equal to that in model $X$ (at time $0$ by hypothesis, and thereafter by Theorem~\ref{thm:lsc});
    \item \textbf{blue} for a \textit{reaction} whose rate constants in models $X$ and $Y$ are assumed to be equal, or for a \textit{species} whose number of molecules in models $X$ and $Y$ is equal (at time $0$ by hypothesis, and thereafter by Theorem~\ref{thm:lsc});
    \item \textbf{gray} for a \textit{reaction} whose rate constants in models $X$ and $Y$ are not compared by assumption, or for a \textit{species} whose number of molecules in models $X$ and $Y$ is not compared (neither at time $0$ nor at any other time).
\end{itemize}

Heuristically, the examples below address the question: ``what happens if we start with a MAK SRN and modify its rate constants according to the colors of the reactions?''. Roughly speaking, the answer can be read in the colors of the species: green species will increase, red species will decrease, and blue species will remain the same. We have no control over the behavior of the gray species, and their amount can be freely changed at any time without invalidating the result.

As was the case in Example~\ref{ex:pcc}, all MAK SRNs presented in this section are non-explosive. The underlying reason is the same, now generalized to networks of arbitrary dimension: the rates of the reactions that cause a net increase in the total molecular count are at most linear in the state variables. To work out the details, one can consider, for instance, the process that counts the total number of molecules of the system at time $t$ and dominate it with a pure birth process with constant jump sizes and a linear birth rate.

\vskip 10 pt

\begin{example}[Reversible reaction]
\label{ex:rev}
    This is one of the simplest examples of a MAK SRN, describing a substrate S being reversibly converted into a product P. The result presented here is somewhat self-evident, but it is important to state it explicitly since it will be referred to in the subsequent Example~\ref{ex:rmm}.
    
    \begin{figure}[htbp]
        \begin{center}
            \vskip 8 pt
            \begin{tikzpicture}
                \node[state] (S)  at (0,0)    {$\textcolor{red}{\text{S}}$}; 
                \node[state] (P)  at (3,0)    {$\textcolor{Green}{\text{P}}$}; 
                \path[-{Stealth[harpoon]}] 
                ([yshift = 2 px]S.east)     edge[color = Green]    node{} ([yshift = 2 px]P.west)
                ([yshift = -2 px]P.west)    edge[color = red]    node{} ([yshift = -2 px]S.east);
            \end{tikzpicture}
        \end{center}
    \caption{the rate constants satisfy $\kappa_{\textup{S} \to \textup{P}}^X \leq \kappa_{\textup{S} \to \textup{P}}^Y$ and $\kappa_{\textup{P} \to \textup{S}}^X \geq \kappa_{\textup{P} \to \textup{S}}^Y$, and by Theorem~\ref{thm:lsc} the species counts fulfill $X_\textup{S}(t) \geq Y_\textup{S}(t)$ and $X_\textup{P}(t) \leq Y_\textup{P}(t)$ for all $t \geq 0$ almost surely.}
    \label{fig:rev}
    \end{figure}
    
    As displayed in Figure~\ref{fig:rev}, in this instance Theorem~\ref{thm:lsc} specializes as follows: if the two MAKs chosen for the reaction network $\text{S} \rightleftharpoons \text{P}$ are such that the forward reaction $\text{S} \to \text{P}$ is stronger in model $Y$ (the associated rate constant is greater than the homologous constant for model $X$) and the reverse reaction $\text{P} \to \text{S}$ is stronger in model $X$, and we let the trajectories of the processes start at points $X(0) = x^0$ and $Y(0) = y^0$ chosen in such a way that $x^0 \preord{M} y^0$ with $M = \begin{pmatrix} -1 & 0 \\ 0 & 1 \end{pmatrix}$, i.e.\
    \begin{equation*}
        x_\text{S}^0 \geq y_\text{S}^0 \ \ \text{and} \ \ x_\text{P}^0 \leq y_\text{P}^0,
    \end{equation*}
    then it is possible to construct $X$ and $Y$ almost surely preserving $\preord{M}$ for all $t \geq 0$, i.e.\
    \begin{equation*}
        \P\brb{X_\text{S}(t) \geq Y_\text{S}(t), \ X_\text{P}(t) \leq Y_\text{P}(t) \ \ \forall t \geq 0} = 1.
    \end{equation*}
    Loosely speaking, under our hypotheses on the MAKs, if at time $0$ the process $Y$ has fewer S molecules and more P molecules than $X$, the situation will remain the same for all subsequent times.
\end{example}

\vskip 10 pt

\begin{example}[SIR and SIS]
\label{ex:sis}
    A canonical example of a MAK SRN is the \textbf{SIR} model, which has been used in epidemiology since the beginning of the 20th century (see \cite{kermack1927contribution} for an early deterministic formulation) to predict the spread of infectious diseases. Each member of a population is assigned to one of the three compartments S (Susceptible), I (Infectious) or R (Recovered), and can change its status according to one of two reactions representing the infection and recovery mechanisms. For this model, the algorithm finds no preorder allowing the application of Theorem~\ref{thm:lsc} (see Figure~\ref{fig:sir}).

    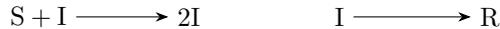
\begin{figure}[htbp]
        \begin{center}
            \vskip 8 pt
            \begin{tikzpicture}
                \node[state] (S+I)  at (0,0)   {$\text{S}+\text{I}$}; 
                \node[state] (2I)   at (2,0)    {$2\text{I}$}; 
                \node[state] (I)    at (4,0)    {$\text{I}$};
                \node[state] (R)    at (6,0)    {$\text{R}$};
                \path[-{Stealth}]
                (S+I)   edge    node{} (2I)
                (I)     edge    node{} (R);
            \end{tikzpicture}
        \end{center}
    \caption{no preordering structure can be found with our algorithm for this reaction network.}
    \label{fig:sir}
    \end{figure}
       
    However, our theory can be applied to the closely related \textbf{SIS} model, where recovered individuals immediately return to the susceptible state (acquiring no lasting immunity). The SIS reaction network is also very similar to $\text{S} \rightleftharpoons \text{I}$ (the same as in Example~\ref{ex:rev}, up to renaming P with I), the only difference being that in this case species I autocatalyzes its own production from S. Unsurprisingly, we therefore obtain the same result as in Example~\ref{ex:rev}: strengthening the conversion of S into I and weakening the conversion of I into S leads to fewer S individuals and more I individuals (see Figure~\ref{fig:sis}).

    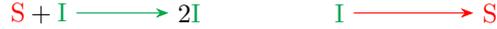
\begin{figure}[htbp] 
        \begin{center}
            \vskip 8 pt
            \begin{tikzpicture}
                \node[state] (S+I)  at (0,0)    {$\textcolor{red}{\text{S}}+\textcolor{Green}{\text{I}}$}; 
                \node[state] (2I)   at (2,0)    {$2\textcolor{Green}{\text{I}}$}; 
                \node[state] (I)    at (4,0)    {$\textcolor{Green}{\text{I}}$};
                \node[state] (S)    at (6,0)    {$\textcolor{red}{\text{S}}$};
                \path[-{Stealth}]
                (S+I)   edge[color = Green]    node{} (2I)
                (I)     edge[color = red]    node{} (S);
            \end{tikzpicture}
        \end{center}
    \caption{the rate constants satisfy $\kappa_{\textup{S}+\textup{I} \to 2\textup{I}}^X \leq \kappa_{\textup{S}+\textup{I} \to 2\textup{I}}^Y$ and $\kappa_{\textup{I} \to \textup{S}}^X \geq \kappa_{\textup{I} \to \textup{S}}^Y$, and by Theorem~\ref{thm:lsc} the species counts fulfill $X_\textup{S}(t) \geq Y_\textup{S}(t)$ and $X_\textup{I}(t) \leq Y_\textup{I}(t)$ for all $t \geq 0$ almost surely.}
    \label{fig:sis}
    \end{figure}

    The reason why the hypotheses of Theorem~\ref{thm:lsc} are met by SIS but not by SIR is strictly related to the different conservation laws of the two systems. To understand what can go wrong with the SIR model, let $\kappa_{\text{S}+\text{I} \to 2\text{I}}^X \leq \kappa_{\text{S}+\text{I} \to 2\text{I}}^Y$ and $\kappa_{\text{I} \to \text{R}}^X \geq \kappa_{\text{I} \to \text{R}}^Y$ (analogously to the assumptions on the SIS MAKs in Figure~\ref{fig:sis}) and imagine that at some time $t$ one has $\brb{X_{\text{S}}(t),X_{\text{I}}(t),X_{\text{R}}(t)} = \rb{2,1,0}$ and $\brb{Y_{\text{S}}(t),Y_{\text{I}}(t),Y_{\text{R}}(t)} = \rb{1,1,1}$, two points lying in the same stoichiometric compatibility class. $X$ has not fewer S molecules and not more I molecules than $Y$ -- as in the preorder compatible with the SIS reaction network --  but this would no longer be true should $\text{S}+\text{I} \to 2\text{I}$ fire in the $X$ model. However, this cannot be avoided by means of the results presented in this paper, because condition \ref{hyp:a} of Theorem~\ref{thm:pcc} (and therefore also hypotheses \ref{hyp:a1}, \ref{hyp:a2}, \ref{hyp:a3} of Theorem~\ref{thm:lsc}, which are stronger) is not met:
    \begin{equation*}
        \lambda_{\text{S}+\text{I} \to 2\text{I}}^X \brb{(2,1,0)} = \kappa_{\text{S}+\text{I} \to 2\text{I}}^X \cdot 2 \leq \kappa_{\text{S}+\text{I} \to 2\text{I}}^Y = \lambda_{\text{S}+\text{I} \to 2\text{I}}^Y \brb{(1,1,1)}
    \end{equation*}
    is not necessarily true. On the other hand, SIS does not have the same problem, essentially because its states $\rb{2,1}$ and $\rb{1,1}$ are not in the same stoichiometric compatibility class. In order for $\rb{x_\text{S},x_\text{I}}$ and $\rb{y_\text{S},y_\text{I}}$ to belong to the same class (our theory is formulated for this setting), $x_\text{S} + x_\text{I} = y_\text{S} + y_\text{I}$ must hold. Under this assumption, if at some time $t$ the processes $\brb{X_{\text{S}}(t),X_{\text{I}}(t)} = \rb{x_\text{S},x_\text{I}}$ and $\brb{Y_{\text{S}}(t),Y_{\text{I}}(t)} = \rb{y_\text{S},y_\text{I}}$ are related by the preorder described in Figure~\ref{fig:sis}, the relation can be violated as a consequence of $\text{S}+\text{I} \to 2\text{I}$ firing in the $X$ model only if $\rb{x_\text{S},x_\text{I}} = \rb{y_\text{S},y_\text{I}}$, in which case
    \begin{equation*}
        \lambda_{\text{S}+\text{I} \to 2\text{I}}^X \brb{(x_\text{S},x_\text{I})} = \kappa_{\text{S}+\text{I} \to 2\text{I}}^X \cdot  x_\text{S} x_\text{I} \leq \kappa_{\text{S}+\text{I} \to 2\text{I}}^Y \cdot y_\text{S} y_\text{I} = \lambda_{\text{S}+\text{I} \to 2\text{I}}^Y \brb{(y_\text{S},y_\text{I})}
    \end{equation*}
    is always satisfied.
\end{example}

\vskip 10 pt

\begin{example}[Michaelis-Menten]
\label{ex:mm2}
    We will present two different results for the Michaelis-Menten mechanism, already introduced in Example~\ref{ex:mm1}. They also involve some rate constants that are equal in processes $X$ and $Y$ and species counts that are not compared, a feature that we have not yet observed in the previous examples. Even though some species counts cannot be controlled in general, we can still derive partial information on them -- namely when $\brb{X(t),Y(t)}$ lies on the boundary of the chosen preorder relation -- thanks to the conservation laws: this is why these species can nevertheless interact with others whose number of molecules is compared between $X$ and $Y$. This point will be discussed in more detail in Example~\ref{ex:his}.

    The first preorder found by our algorithm coincides with that of \cite[Example~4.1]{campos2023comparison}, but is obtained under weaker conditions on the rate constants: by accelerating the binding of the substrate S to the enzyme E and the subsequent unbinding of the intermediate complex C into E and the product P, and by decelerating its alternative unbinding into S and E, more product and less substrate are expected. For a graphical representation of this scenario and the list of precise inequalities, see Figure~\ref{fig:mm1}.

    \begin{figure}[htbp]
        \begin{center}
            \vskip 8 pt
            \begin{tikzpicture}
                \node[state] (S+E)  at (0,0)    {$\textcolor{red}{\text{S}}+\textcolor{Gray}{\text{E}}$}; 
                \node[state] (C)    at (2,0)    {$\textcolor{Gray}{\text{C}}$}; 
                \node[state] (E+P)  at (4,0)    {$\textcolor{Gray}{\text{E}}+\textcolor{Green}{\text{P}}$};
                \path[-{Stealth[harpoon]}] 
                ([yshift = 2 px]S+E.east)  edge[color = Green]    node{} ([yshift = 2 px]C.west)
                ([yshift = -2 px]C.west)   edge[color = red]    node{} ([yshift = -2 px]S+E.east);
                \path[-{Stealth}]
                (C)       edge[color = Green]                    node{} (E+P);
            \end{tikzpicture}
        \end{center}
    \caption{the rate constants satisfy $\kappa_{\textup{S}+\textup{E} \to \textup{C}}^X \leq \kappa_{\textup{S}+\textup{E} \to \textup{C}}^Y$, $\kappa_{\textup{C} \to \textup{S}+\textup{E}}^X \geq \kappa_{\textup{C} \to \textup{S}+\textup{E}}^Y$ and $\kappa_{\textup{C} \to \textup{E}+\textup{P}}^X \leq \kappa_{\textup{C} \to \textup{E}+\textup{P}}^Y$, and by Theorem~\ref{thm:lsc} the species counts fulfill $X_\textup{S}(t) \geq Y_\textup{S}(t)$ and $X_\textup{P}(t) \leq Y_\textup{P}(t)$ for all $t \geq 0$ almost surely.}
    \label{fig:mm1}
    \end{figure}
    
    On the other hand, if one adopts the same conditions on the MAKs as proposed in \cite{campos2023comparison} (the rate constants of the reactions $\text{S} + \text{E} \rightleftharpoons \text{C}$ must be the same in models $X$ and $Y$, only the one associated to $\text{C} \to \text{E} + \text{P}$ can be greater in $Y$ than in $X$), it is possible to get more precise information, as displayed in Figure~\ref{fig:mm2}: in this case, we can also control the number of molecules of E (that will increase) and C (that will decrease). Hence, our algorithm not only recovered the same result presented in \cite{campos2023comparison} but also went further, showing that it allows for either generalization or refinement.

    \begin{figure}[htbp]
        \begin{center}
            \vskip 8 pt
            \begin{tikzpicture}
                \node[state] (S+E)  at (0,0)    {$\textcolor{red}{\text{S}}+\textcolor{Green}{\text{E}}$}; 
                \node[state] (C)    at (2,0)    {$\textcolor{red}{\text{C}}$}; 
                \node[state] (E+P)  at (4,0)    {$\textcolor{Green}{\text{E}}+\textcolor{Green}{\text{P}}$};
                \path[-{Stealth[harpoon]}] 
                ([yshift = 2 px]S+E.east)  edge[color = blue]    node{} ([yshift = 2 px]C.west)
                ([yshift = -2 px]C.west)   edge[color = blue]    node{} ([yshift = -2 px]S+E.east);
                \path[-{Stealth}]
                (C)       edge[color = Green]                    node{} (E+P);
            \end{tikzpicture}
        \end{center}
    \caption{the rate constants satisfy $\kappa_{\textup{S}+\textup{E} \to \textup{C}}^X = \kappa_{\textup{S}+\textup{E} \to \textup{C}}^Y$, $\kappa_{\textup{C} \to \textup{S}+\textup{E}}^X = \kappa_{\textup{C} \to \textup{S}+\textup{E}}^Y$ and $\kappa_{\textup{C} \to \textup{E}+\textup{P}}^X \leq \kappa_{\textup{C} \to \textup{E}+\textup{P}}^Y$, and by Theorem~\ref{thm:lsc} the species counts fulfill $X_\textup{S}(t) \geq Y_\textup{S}(t)$, $X_\textup{E}(t) \leq Y_\textup{E}(t)$, $X_\textup{C}(t) \geq Y_\textup{C}(t)$ and $X_\textup{P}(t) \leq Y_\textup{P}(t)$ for all $t \geq 0$ almost surely.}
    \label{fig:mm2}
    \end{figure}    
\end{example}

\vskip 10 pt

\begin{example}[Reversible Michaelis-Menten]
\label{ex:rmm}
    In the previous examples, we were able to find preordering structures for both $\text{S} \rightleftharpoons \text{P}$ (see Example~\ref{ex:rev}) and Michaelis-Menten (Example~\ref{ex:mm2}), which is a more detailed description of how $\text{S} \to \text{P}$ can actually occur. One might then expect to still get some results after replacing the reaction $\text{S} \to \text{P}$ with $\text{S} + \text{E} \rightleftharpoons \text{C} \to \text{E} + \text{P}$ in the first network (see Figure~\ref{fig:rmm}), but this is not the case. The problem here is once again connected to the conservation laws. The application of Theorem~\ref{thm:lsc} to $\text{S} \rightleftharpoons \text{P}$ presented in Example~\ref{ex:rev} relies on the fact that, on a given stoichiometric compatibility class, the total number of S and P molecules is the same in processes $X$ and $Y$ (therefore, in order for $\brb{X(t),Y(t)}$ to abandon the chosen preorder relation, $X(t) = Y(t)$ should hold immediately before, and this is key to preventing the relation from being left). However, when $\text{S} \to \text{P}$ is replaced by a Michaelis-Menten mechanism, this is no longer the case, since now S and P molecules can also be converted into C molecules.

    \begin{figure}[htbp]
        \begin{center}
            \vskip 8 pt
            \begin{tikzpicture}
                \node[state] (S+E)  at (0,0)    {$\text{S}+\text{E}$}; 
                \node[state] (C)    at (2,0)    {$\text{C}$}; 
                \node[state] (E+P)  at (4,0)    {$\text{E}+\text{P}$};
                \node[state] (S)    at (1,-1)   {$\text{S}$};
                \node[state] (P)    at (3,-1)   {$\text{P}$};
                \path[-{Stealth[harpoon]}] 
                ([yshift = 2 px]S+E.east)  edge     node{} ([yshift = 2 px]C.west)
                ([yshift = -2 px]C.west)   edge     node{} ([yshift = -2 px]S+E.east);
                \path[-{Stealth}]
                (C)       edge                      node{} (E+P)
                (P)       edge                      node{} (S);
            \end{tikzpicture}
        \end{center}
    \caption{contrary to expectations, no preordering structure can be found for this reaction network.}
    \label{fig:rmm}
    \end{figure}
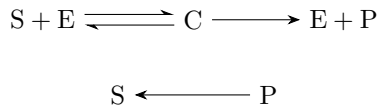
\end{example}

\vskip 10 pt

\begin{example}[Signaling cascade]
\label{ex:sig}
    Signaling cascades are a class of reaction networks modeling the transmission of signals in cells. The topology of these networks can vary a lot depending on the specific model, but they all share the same basic structure: they consist of multiple similar layers, each triggering the subsequent one in a chain. They have been studied extensively in the reaction networks literature, see for example \cite{feliu2012algebraic}.

    In the particular instance of a signaling cascade that we analyzed with our methods, the number of layers is three. They are not connected by reactions, but by the species that they share. Each layer is a Michaelis-Menten kinetics transforming a substrate $\text{S}_i$ into a product $\text{P}_i$, which then acts as a catalyst in the conversion of $\text{S}_{i+1}$ into $\text{P}_{i+1}$ in the following layer. Our algorithm revealed seven preordering structures for this network, three of which are displayed in Figures~\ref{fig:sc1}, \ref{fig:sc2}, and \ref{fig:sc3}.

    \begin{figure}[htbp]
        \begin{center}
            \vskip 8 pt
            \begin{tikzpicture}
                \node[state] (S1+P0)  at (0,0)    {$\textcolor{blue}{\text{S}_1}+\textcolor{blue}{\text{P}_0}$};
                \node[state] (C1)    at (2,0)    {$\textcolor{blue}{\text{C}_1}$}; 
                \node[state] (P0+P1)  at (4,0)    {$\textcolor{blue}{\text{P}_0}+\textcolor{Green}{\text{P}_1}$};
                \node[state] (S2+P1)  at (3,-1.5)    {$\textcolor{red}{\text{S}_2}+\textcolor{Green}{\text{P}_1}$};
                \node[state] (C2)    at (5,-1.5)    {$\textcolor{red}{\text{C}_2}$}; 
                \node[state] (P1+P2)  at (7,-1.5)    {$\textcolor{Green}{\text{P}_1}+\textcolor{Gray}{\text{P}_2}$};
                \node[state] (S3+P2)  at (6,-3)    {$\textcolor{red}{\text{S}_3}+\textcolor{Gray}{\text{P}_2}$};
                \node[state] (C3)    at (8,-3)    {$\textcolor{Gray}{\text{C}_3}$}; 
                \node[state] (P2+P3)  at (10,-3)    {$\textcolor{Gray}{\text{P}_2}+\textcolor{Green}{\text{P}_3}$};
                \path[-{Stealth[harpoon]}]
                ([yshift = 2 px]S1+P0.east)  edge[color = blue]    node{} ([yshift = 2 px]C1.west)
                ([yshift = -2 px]C1.west)   edge[color = blue]    node{} ([yshift = -2 px]S1+P0.east)
                ([yshift = 2 px]S2+P1.east)  edge[color = blue]    node{} ([yshift = 2 px]C2.west)
                ([yshift = -2 px]C2.west)   edge[color = blue]    node{} ([yshift = -2 px]S2+P1.east)
                ([yshift = 2 px]S3+P2.east)  edge[color = Green]    node{} ([yshift = 2 px]C3.west)
                ([yshift = -2 px]C3.west)   edge[color = red]    node{} ([yshift = -2 px]S3+P2.east);
                \path[-{Stealth}]
                (C1)       edge[color = blue]                    node{} (P0+P1)
                (C2)       edge[color = Green]                    node{} (P1+P2)
                (C3)       edge[color = Green]                    node{} (P2+P3);
            \end{tikzpicture}
        \end{center}
    \caption{the rate constants satisfy $\kappa_{\textup{S}_1+\textup{P}_0 \to \textup{C}_1}^X = \kappa_{\textup{S}_1+\textup{P}_0 \to \textup{C}_1}^Y$, $\kappa_{\textup{C}_1 \to \textup{S}_1+\textup{P}_0}^X = \kappa_{\textup{C}_1 \to \textup{S}_1+\textup{P}_0}^Y$, $\kappa_{\textup{C}_1 \to \textup{P}_0+\textup{P}_1}^X = \kappa_{\textup{C}_1 \to \textup{P}_0+\textup{P}_1}^Y$, $\kappa_{\textup{S}_2+\textup{P}_1 \to \textup{C}_2}^X = \kappa_{\textup{S}_2+\textup{P}_1 \to \textup{C}_2}^Y$, $\kappa_{\textup{C}_2 \to \textup{S}_2+\textup{P}_1}^X = \kappa_{\textup{C}_2 \to \textup{S}_2+\textup{P}_1}^Y$, $\kappa_{\textup{C}_2 \to \textup{P}_1+\textup{P}_2}^X \leq \kappa_{\textup{C}_2 \to \textup{P}_1+\textup{P}_2}^Y$, $\kappa_{\textup{S}_3+\textup{P}_2 \to \textup{C}_3}^X \leq \kappa_{\textup{S}_3+\textup{P}_2 \to \textup{C}_3}^Y$, $\kappa_{\textup{C}_3 \to \textup{S}_3+\textup{P}_2}^X \geq \kappa_{\textup{C}_3 \to \textup{S}_3+\textup{P}_2}^Y$ and $\kappa_{\textup{C}_3 \to \textup{P}_2+\textup{P}_3}^X \leq \kappa_{\textup{C}_3 \to \textup{P}_2+\textup{P}_3}^Y$, and by Theorem~\ref{thm:lsc} the species counts fulfill $X_{\textup{P}_0}(t) = Y_{\textup{P}_0}(t)$, $X_{\textup{S}_1}(t) = Y_{\textup{S}_1}(t)$, $X_{\textup{C}_1}(t) = Y_{\textup{C}_1}(t)$, $X_{\textup{P}_1}(t) \leq Y_{\textup{P}_1}(t)$, $X_{\textup{S}_2}(t) \geq Y_{\textup{S}_2}(t)$, $X_{\textup{C}_2}(t) \geq Y_{\textup{C}_2}(t)$, $X_{\textup{S}_3}(t) \geq Y_{\textup{S}_3}(t)$ and $X_{\textup{P}_3}(t) \leq Y_{\textup{P}_3}(t)$ for all $t \geq 0$ almost surely.}
    \label{fig:sc1}
    \end{figure}

    \begin{figure}[htbp]
        \begin{center}
            \vskip 8 pt
            \begin{tikzpicture}
                \node[state] (S1+P0)  at (0,0)    {$\textcolor{blue}{\text{S}_1}+\textcolor{blue}{\text{P}_0}$};
                \node[state] (C1)    at (2,0)    {$\textcolor{blue}{\text{C}_1}$}; 
                \node[state] (P0+P1)  at (4,0)    {$\textcolor{blue}{\text{P}_0}+\textcolor{blue}{\text{P}_1}$};
                \node[state] (S2+P1)  at (3,-1.5)    {$\textcolor{blue}{\text{S}_2}+\textcolor{blue}{\text{P}_1}$};
                \node[state] (C2)    at (5,-1.5)    {$\textcolor{blue}{\text{C}_2}$}; 
                \node[state] (P1+P2)  at (7,-1.5)    {$\textcolor{blue}{\text{P}_1}+\textcolor{Green}{\text{P}_2}$};
                \node[state] (S3+P2)  at (6,-3)    {$\textcolor{red}{\text{S}_3}+\textcolor{Green}{\text{P}_2}$};
                \node[state] (C3)    at (8,-3)    {$\textcolor{red}{\text{C}_3}$}; 
                \node[state] (P2+P3)  at (10,-3)    {$\textcolor{Green}{\text{P}_2}+\textcolor{Green}{\text{P}_3}$};
                \path[-{Stealth[harpoon]}]
                ([yshift = 2 px]S1+P0.east)  edge[color = blue]    node{} ([yshift = 2 px]C1.west)
                ([yshift = -2 px]C1.west)   edge[color = blue]    node{} ([yshift = -2 px]S1+P0.east)
                ([yshift = 2 px]S2+P1.east)  edge[color = blue]    node{} ([yshift = 2 px]C2.west)
                ([yshift = -2 px]C2.west)   edge[color = blue]    node{} ([yshift = -2 px]S2+P1.east)
                ([yshift = 2 px]S3+P2.east)  edge[color = blue]    node{} ([yshift = 2 px]C3.west)
                ([yshift = -2 px]C3.west)   edge[color = blue]    node{} ([yshift = -2 px]S3+P2.east);
                \path[-{Stealth}]
                (C1)       edge[color = blue]                    node{} (P0+P1)
                (C2)       edge[color = blue]                    node{} (P1+P2)
                (C3)       edge[color = Green]                    node{} (P2+P3);
            \end{tikzpicture}
        \end{center}
    \caption{the rate constants satisfy $\kappa_{\textup{S}_1+\textup{P}_0 \to \textup{C}_1}^X = \kappa_{\textup{S}_1+\textup{P}_0 \to \textup{C}_1}^Y$, $\kappa_{\textup{C}_1 \to \textup{S}_1+\textup{P}_0}^X = \kappa_{\textup{C}_1 \to \textup{S}_1+\textup{P}_0}^Y$, $\kappa_{\textup{C}_1 \to \textup{P}_0+\textup{P}_1}^X = \kappa_{\textup{C}_1 \to \textup{P}_0+\textup{P}_1}^Y$, $\kappa_{\textup{S}_2+\textup{P}_1 \to \textup{C}_2}^X = \kappa_{\textup{S}_2+\textup{P}_1 \to \textup{C}_2}^Y$, $\kappa_{\textup{C}_2 \to \textup{S}_2+\textup{P}_1}^X = \kappa_{\textup{C}_2 \to \textup{S}_2+\textup{P}_1}^Y$, $\kappa_{\textup{C}_2 \to \textup{P}_1+\textup{P}_2}^X = \kappa_{\textup{C}_2 \to \textup{P}_1+\textup{P}_2}^Y$, $\kappa_{\textup{S}_3+\textup{P}_2 \to \textup{C}_3}^X = \kappa_{\textup{S}_3+\textup{P}_2 \to \textup{C}_3}^Y$, $\kappa_{\textup{C}_3 \to \textup{S}_3+\textup{P}_2}^X = \kappa_{\textup{C}_3 \to \textup{S}_3+\textup{P}_2}^Y$ and $\kappa_{\textup{C}_3 \to \textup{P}_2+\textup{P}_3}^X \leq \kappa_{\textup{C}_3 \to \textup{P}_2+\textup{P}_3}^Y$, and by Theorem~\ref{thm:lsc} the species counts fulfill $X_{\textup{P}_0}(t) = Y_{\textup{P}_0}(t)$, $X_{\textup{S}_1}(t) = Y_{\textup{S}_1}(t)$, $X_{\textup{C}_1}(t) = Y_{\textup{C}_1}(t)$, $X_{\textup{P}_1}(t) = Y_{\textup{P}_1}(t)$, $X_{\textup{S}_2}(t) = Y_{\textup{S}_2}(t)$, $X_{\textup{C}_2}(t) = Y_{\textup{C}_2}(t)$, $X_{\textup{P}_2}(t) \leq Y_{\textup{P}_2}(t)$, $X_{\textup{S}_3}(t) \geq Y_{\textup{S}_3}(t)$, $X_{\textup{C}_3}(t) \geq Y_{\textup{C}_3}(t)$ and $X_{\textup{P}_3}(t) \leq Y_{\textup{P}_3}(t)$ for all $t \geq 0$ almost surely.}
    \label{fig:sc2}
    \end{figure}

    \begin{figure}[htbp]
        \begin{center}
            \vskip 8 pt
            \begin{tikzpicture}
                \node[state] (S1+P0)  at (0,0)    {$\textcolor{blue}{\text{S}_1}+\textcolor{blue}{\text{P}_0}$};
                \node[state] (C1)    at (2,0)    {$\textcolor{blue}{\text{C}_1}$}; 
                \node[state] (P0+P1)  at (4,0)    {$\textcolor{blue}{\text{P}_0}+\textcolor{blue}{\text{P}_1}$};
                \node[state] (S2+P1)  at (3,-1.5)    {$\textcolor{blue}{\text{S}_2}+\textcolor{blue}{\text{P}_1}$};
                \node[state] (C2)    at (5,-1.5)    {$\textcolor{blue}{\text{C}_2}$}; 
                \node[state] (P1+P2)  at (7,-1.5)    {$\textcolor{blue}{\text{P}_1}+\textcolor{Gray}{\text{P}_2}$};
                \node[state] (S3+P2)  at (6,-3)    {$\textcolor{Gray}{\text{S}_3}+\textcolor{Gray}{\text{P}_2}$};
                \node[state] (C3)    at (8,-3)    {$\textcolor{Gray}{\text{C}_3}$}; 
                \node[state] (P2+P3)  at (10,-3)    {$\textcolor{Gray}{\text{P}_2}+\textcolor{Gray}{\text{P}_3}$};
                \path[-{Stealth[harpoon]}]
                ([yshift = 2 px]S1+P0.east)  edge[color = blue]    node{} ([yshift = 2 px]C1.west)
                ([yshift = -2 px]C1.west)   edge[color = blue]    node{} ([yshift = -2 px]S1+P0.east)
                ([yshift = 2 px]S2+P1.east)  edge[color = blue]    node{} ([yshift = 2 px]C2.west)
                ([yshift = -2 px]C2.west)   edge[color = blue]    node{} ([yshift = -2 px]S2+P1.east)
                ([yshift = 2 px]S3+P2.east)  edge[color = Gray]    node{} ([yshift = 2 px]C3.west)
                ([yshift = -2 px]C3.west)   edge[color = Gray]    node{} ([yshift = -2 px]S3+P2.east);
                \path[-{Stealth}]
                (C1)       edge[color = blue]                    node{} (P0+P1)
                (C2)       edge[color = blue]                    node{} (P1+P2)
                (C3)       edge[color = Gray]                    node{} (P2+P3);
            \end{tikzpicture}
        \end{center}
    \caption{the rate constants satisfy $\kappa_{\textup{S}_1+\textup{P}_0 \to \textup{C}_1}^X = \kappa_{\textup{S}_1+\textup{P}_0 \to \textup{C}_1}^Y$, $\kappa_{\textup{C}_1 \to \textup{S}_1+\textup{P}_0}^X = \kappa_{\textup{C}_1 \to \textup{S}_1+\textup{P}_0}^Y$, $\kappa_{\textup{C}_1 \to \textup{P}_0+\textup{P}_1}^X = \kappa_{\textup{C}_1 \to \textup{P}_0+\textup{P}_1}^Y$, $\kappa_{\textup{S}_2+\textup{P}_1 \to \textup{C}_2}^X = \kappa_{\textup{S}_2+\textup{P}_1 \to \textup{C}_2}^Y$, $\kappa_{\textup{C}_2 \to \textup{S}_2+\textup{P}_1}^X = \kappa_{\textup{C}_2 \to \textup{S}_2+\textup{P}_1}^Y$ and $\kappa_{\textup{C}_2 \to \textup{P}_1+\textup{P}_2}^X = \kappa_{\textup{C}_2 \to \textup{P}_1+\textup{P}_2}^Y$, and by Theorem~\ref{thm:lsc} the species counts fulfill $X_{\textup{P}_0}(t) = Y_{\textup{P}_0}(t)$, $X_{\textup{S}_1}(t) = Y_{\textup{S}_1}(t)$, $X_{\textup{C}_1}(t) = Y_{\textup{C}_1}(t)$, $X_{\textup{P}_1}(t) = Y_{\textup{P}_1}(t)$, $X_{\textup{S}_2}(t) = Y_{\textup{S}_2}(t)$ and $X_{\textup{C}_2}(t) = Y_{\textup{C}_2}(t)$ for all $t \geq 0$ almost surely.}
    \label{fig:sc3}
    \end{figure}

    The result presented in Figure~\ref{fig:sc3} is peculiar relative to the others, as it does not involve any inequalities. A possible interpretation is that in this model, if certain rate constants are not changed, we do not expect the behavior of selected species to change, regardless of how the remaining parameters may be increased or decreased and of how the remaining species might be affected by these changes. What is then highlighted by this preordering structure -- that in this case could also be called \textit{equivalence structure}, since the underlying binary relation on the state space of the CTMCs is now an equivalence relation -- is how the dynamics of some species depends (or does not depend) on the different rate constants and on the dynamics of the other species. For instance, in the specific example of Figure~\ref{fig:sc3}, we can say that the time evolution of the number of molecules of the species appearing in the third layer of the cascade does not influence whatsoever what happens to the remaining species. Notice that for any reaction network there are always at least two admissible but trivial equivalence structures: the one in which the MAKs and the trajectories of all species are equal (up to the first explosion) in $X$ and $Y$, and the opposite extreme case, in which nothing is compared between $X$ and $Y$. We did not include these two scenarios in the output of the algorithm described in Section~\ref{sub:imp}, as they provide no additional understanding of the reaction network under study. On the other hand, the non-trivial equivalence structures, when present, are displayed in the output -- as they could offer valuable information on the dependencies among the objects involved in the model -- and are listed separately from the other preordering structures.
\end{example}

\vskip 10 pt

\begin{example}[Population dynamics]
\label{ex:lkv}
    A precursory example of a reaction network is the Lotka-Volterra model, dating back to 1920 and describing the predation of one species by another (see \cite{murray2007mathematical} for an overview). Since then, many alternative population dynamics frameworks have been developed to describe different situations in the same spirit. Here a symmetric model is presented, similar to the one studied in \cite{gabel2013survival} but with the introduction of inflows and outflows (instead of quadratic decays), in which two populations A and B compete both with the same role. Individuals can \emph{enter} ($0 \to \text{A}$, $0 \to \text{B}$) or \emph{exit} the system ($\text{A} \to 0$, $\text{B} \to 0$), \emph{breed} ($\text{A} \to 2\text{A}$, $\text{B} \to 2\text{B}$), or \emph{die} due to competition with the other species ($\text{A}+\text{B} \to \text{B}$, $\text{A}+\text{B} \to \text{A}$). The symmetry of the model is reflected in the preordering structure displayed in Figure~\ref{fig:lkv}: both populations thrive -- at the expense of the other species -- when the corresponding influx and breed rates grow and outflow and competitive death rates decline.

    \begin{figure}[htbp]
        \begin{center}
            \vskip 8 pt
            \begin{tikzpicture}
                \node[state] (0)    at (0,-1.5)     {$0$}; 
                \node[state] (A)    at (2,0)        {$\textcolor{Green}{\text{A}}$}; 
                \node[state] (2A)   at (4,0)        {$2\textcolor{Green}{\text{A}}$};
                \node[state] (B)    at (2,-3)       {$\textcolor{red}{\text{B}}$}; 
                \node[state] (2B)   at (4,-3)       {$2\textcolor{red}{\text{B}}$};
                \node[state] (A+B)  at (2,-1.5)     {$\textcolor{Green}{\text{A}}+\textcolor{red}{\text{B}}$};
                \path[-{Stealth[harpoon]}]
                ([yshift = 7 px]0.east)     edge[color = Green]     node{} ([yshift = 1 px]A.west)
                ([yshift = -5 px]A.west)    edge[color = red]     node{} ([yshift = 1 px]0.east)
                ([yshift = -1 px]0.east)    edge[color = red]     node{} ([yshift = 5 px]B.west)
                ([yshift = -1 px]B.west)    edge[color = Green]     node{} ([yshift = -7 px]0.east);
                \path[-{Stealth}]
                (A)     edge[color = Green]     node{}  (2A)
                (B)     edge[color = red]       node{}  (2B)
                (A+B)   edge[color = Green]     node{}  (A)
                        edge[color = red]       node{}  (B);
            \end{tikzpicture}
        \end{center}
    \caption{the rate constants satisfy $\kappa_{0 \to \textup{A}}^X \leq \kappa_{0 \to \textup{A}}^Y$, $\kappa_{\textup{A} \to 0}^X \geq \kappa_{\textup{A} \to 0}^Y$, $\kappa_{\textup{A} \to 2\textup{A}}^X \leq \kappa_{\textup{A} \to 2\textup{A}}^Y$, $\kappa_{\textup{A}+\textup{B} \to \textup{B}}^X \geq \kappa_{\textup{A}+\textup{B} \to \textup{B}}^Y$, $\kappa_{0 \to \textup{B}}^X \geq \kappa_{0 \to \textup{B}}^Y$, $\kappa_{\textup{B} \to 0}^X \leq \kappa_{\textup{B} \to 0}^Y$, $\kappa_{\textup{B} \to 2\textup{B}}^X \geq \kappa_{\textup{B} \to 2\textup{B}}^Y$ and $\kappa_{\textup{A}+\textup{B} \to \textup{A}}^X \leq \kappa_{\textup{A}+\textup{B} \to \textup{A}}^Y$, and by Theorem~\ref{thm:lsc} the species counts fulfill $X_\textup{A}(t) \leq Y_\textup{A}(t)$ and $X_\textup{B}(t) \geq Y_\textup{B}(t)$ for all $t \geq 0$ almost surely.}
    \label{fig:lkv}
    \end{figure}

    As a general comment following Remark~\ref{rem:dif}, note that in all the results we present, the smaller terms of every rate-constant inequality can always be set to zero. This is essentially equivalent to removing the corresponding reaction \emph{only} from the network associated with one between $X$ or $Y$, and thus comparing two CTMCs based on \emph{different} reaction networks (and possibly with different parameters governing the common reactions). If one starts with two different reaction networks, the strategy to look for a potential comparison is to apply the algorithm to their union. It must then be checked whether the resulting rate-constant inequalities are compatible with the two sets of reaction removals required to recover the original networks. On the other hand, if a preordering structure for a single reaction network has already been found and one wants to study its implications for comparing different networks, the task is simpler: such heterogeneous comparisons can be deduced directly from the rate-constant inequalities by setting some of the smaller terms to zero. For example, in the preordering structure shown in Figure~\ref{fig:lkv} we can choose $\kappa_{\textup{A}+\textup{B} \to \textup{A}}^X = 0$ and $\kappa_{\textup{A}+\textup{B} \to \textup{B}}^Y = 0$. The remaining rate constants could take any set of values satisfying the inequalities of Figure~\ref{fig:lkv}, but for simplicity we can assume them equal in $X$ and $Y$. Under these assumptions, $X$ models the dynamics of a network obtained from the original by removing $\textup{A}+\textup{B} \to \textup{A}$, describing a scenario in which population B can induce mortality in population A but not vice versa. Conversely, $Y$ models the opposite scenario, with A now dominant and B unable to strike back. The resulting ordering of species counts presented by Figure~\ref{fig:lkv} (valid in the general case) aligns perfectly with this situation: as dominance shifts from B to A, the first population decreases while the second increases. This general idea will be further discussed and applied in Example~\ref{ex:erg}, where it will allow us to transfer desirable properties from one reaction network to another.
\end{example}

\vskip 10 pt

\begin{example}[Histone modification circuit]
\label{ex:his}
    Figure~\ref{fig:his} presents another relevant example drawn from \cite{campos2023comparison}, consisting of a SRN modeling gene regulation through histone modifications. The system includes three nucleosome states -- unmodified (D), repressed (R), and active (A) -- with a fixed total number of nucleosomes. Molecules of both R and A species can act as catalysts, either promoting the conversion of a D molecule into their own type or the reversion of a molecule of the other type back to D. An additional species P represents a gene product that enhances the conversion of D into A. Molecules of P are produced in the presence of active genes and degraded at a constant rate, creating a positive feedback loop that reinforces activation. See \cite{dodd2007theoretical} and \cite{bruno2022epigenetic} for more details on this model.
    
    \begin{figure}[htbp]
        \begin{center}
            \vskip 8 pt
            \begin{tikzpicture}
                \node[state] (R)    at (0,0)        {$\textcolor{red}{\text{R}}$}; 
                \node[state] (D)    at (2,0)        {$\textcolor{Gray}{\text{D}}$}; 
                \node[state] (A)    at (4,0)        {$\textcolor{Green}{\text{A}}$};
                \node[state] (0)    at (6,0)        {$0$};
                \node[state] (R+D)  at (1,-1)     {$\textcolor{red}{\text{R}}+\textcolor{Gray}{\text{D}}$};
                \node[state] (D+A)  at (3,-1)     {$\textcolor{Gray}{\text{D}}+\textcolor{Green}{\text{A}}$};
                \node[state] (2R)   at (0,-2.5)       {$2\textcolor{red}{\text{R}}$}; 
                \node[state] (A+R)  at (2,-2.5)        {$\textcolor{red}{\text{R}}+\textcolor{Green}{\text{A}}$}; 
                \node[state] (2A)   at (4,-2.5)       {$2\textcolor{Green}{\text{A}}$};
                \node[state] (P+D)  at (2,1.5)      {$\textcolor{Green}{\text{P}}+\textcolor{Gray}{\text{D}}$};
                \node[state] (A+P)  at (4,1.5)      {$\textcolor{Green}{\text{A}}+\textcolor{Green}{\text{P}}$};
                \node[state] (P)    at (6,1.5)        {$\textcolor{Green}{\text{P}}$};
                \path[-{Stealth[harpoon]}]
                ([yshift = 2 px]R.east)     edge[color = Green]    node{} ([yshift = 2 px]D.west)
                ([yshift = -2 px]D.west)    edge[color = red]    node{} ([yshift = -2 px]R.east)
                ([yshift = 2 px]D.east)     edge[color = Green]    node{} ([yshift = 2 px]A.west)
                ([yshift = -2 px]A.west)    edge[color = red]    node{} ([yshift = -2 px]D.east);
                \path[-{Stealth}]
                (A)     edge[color = Green]     node{}  (A+P)
                (P+D)   edge[color = Green]     node{}  (A+P)
                (P)     edge[color = red]       node{}  (0)
                (A+R)   edge[color = red]       node{}  (R+D)
                (R+D)   edge[color = red]       node{}  (2R)
                (A+R)   edge[color = Green]     node{}  (D+A)
                (D+A)   edge[color = Green]     node{}  (2A);
            \end{tikzpicture}
        \end{center}
    \caption{the rate constants satisfy $\kappa_{\textup{R} \to \textup{D}}^X \leq \kappa_{\textup{R} \to \textup{D}}^Y$, $\kappa_{\textup{D} \to \textup{R}}^X \geq \kappa_{\textup{D} \to \textup{R}}^Y$, $\kappa_{\textup{D} \to \textup{A}}^X \leq \kappa_{\textup{D} \to \textup{A}}^Y$, $\kappa_{\textup{A} \to \textup{D}}^X \geq \kappa_{\textup{A} \to \textup{D}}^Y$, $\kappa_{\textup{A} \to \textup{A}+\textup{P}}^X \leq \kappa_{\textup{A} \to \textup{A}+\textup{P}}^Y$, $\kappa_{\textup{P} \to 0}^X \geq \kappa_{\textup{P} \to 0}^Y$, $\kappa_{\textup{P}+\textup{D} \to \textup{A}+\textup{P}}^X \leq \kappa_{\textup{P}+\textup{D} \to \textup{A}+\textup{P}}^Y$, $\kappa_{\textup{R}+\textup{A} \to \textup{R}+\textup{D}}^X \geq \kappa_{\textup{R}+\textup{A} \to \textup{R}+\textup{D}}^Y$, $\kappa_{\textup{R}+\textup{D} \to 2\textup{R}}^X \geq \kappa_{\textup{R}+\textup{D} \to 2\textup{R}}^Y$, $\kappa_{\textup{R}+\textup{A} \to \textup{D}+\textup{A}}^X \leq \kappa_{\textup{R}+\textup{A} \to \textup{D}+\textup{A}}^Y$ and $\kappa_{\textup{D}+\textup{A} \to 2\textup{A}}^X \leq \kappa_{\textup{D}+\textup{A} \to 2\textup{A}}^Y$, and by Theorem~\ref{thm:lsc} the species counts fulfill $X_\textup{R}(t) \geq Y_\textup{R}(t)$, $X_\textup{A}(t) \leq Y_\textup{A}(t)$ and $X_\textup{P}(t) \leq Y_\textup{P}(t)$ for all $t \geq 0$ almost surely.}
    \label{fig:his}
    \end{figure}
    
    As one can deduce by Figure~\ref{fig:his}, our algorithm obtained the same preorder on the species counts as in \cite{campos2023comparison}, but with weaker conditions on the MAKs. In \cite[Example~4.5]{campos2023comparison}, only the rate constant associated to $\text{A} \to \text{A}+\text{P}$ is allowed to increase when going from $X$ to $Y$, while all the others must remain equal. By contrast, in our result, all pairs of homologous rate constants are only compared through an inequality. It is also interesting to observe that we are able to control the copy numbers of species R and A even in the presence of reactions such as $\text{R} \rightleftharpoons \text{D} \rightleftharpoons \text{A}$, where R and A interact closely with D -- a species on which we have no general knowledge. This is possible thanks to the extra information that we acquire when the coupled process $\rb{X,Y}$ touches one of the boundaries of the preorder relation, encoded in the first $m$ rows of the matrices $A$ and $B$ computed in Theorem~\ref{thm:lsc}. In other words, we do not know in general whether the abundance of D is higher in $X$ or $Y$, but this is not a concern because we know it in all situations in which the occurrence of one reaction could lead $\rb{X,Y}$ outside the preorder relation. In those cases, $X_D(t)$ and $Y_D(t)$ are ordered exactly as we need them to be to preserve the preorder on the copy numbers of R and A. A similar situation can also be observed in the first result presented for Michaelis-Menten (see species E and C in Figure~\ref{fig:mm1} of Example~\ref{ex:mm2}) and in one of the preordering structures for the signaling cascade of Example~\ref{ex:sig} (species $\text{P}_2$ and $\text{C}_3$ of Figure~\ref{fig:sc1}).
\end{example}

\vskip 10 pt

\begin{example}[Ergodicity detection]
\label{ex:erg}
    In this last example, we will show again how our theory can also be used to compare CTMCs associated with different reaction networks, as already seen in Example~\ref{ex:lkv}. We will be able to easily prove the ergodicity of a SRN to which the deficiency theory cannot be directly applied, by means of a comparison with another SRN that fits into that framework. The reaction networks under study are depicted in Figure~\ref{fig:er1}.

    \begin{figure}[htbp]
        \begin{center}
            \vskip 8 pt
            \begin{tikzpicture}
            \node[state] (rn)    at (-0.8,1)  {$\rn:$}; 
            \node[state] (0)     at (0,0)     {$0$}; 
            \node[state] (A)     at (2,0)     {$\text{A}$}; 
            \node[state] (B)     at (0,2)     {$\text{B}$};
            \node[state] (rn')    at (4.2,1)   {$\bar{\rn}:$}; 
            \node[state] (0')     at (5,0)     {$0$}; 
            \node[state] (A')     at (7,0)     {$\text{A}$}; 
            \node[state] (B')     at (5,2)     {$\text{B}$};
            \node[state] (A+B')   at (7,2)     {$\text{A}+\text{B}$};
            \path[-{Stealth}]
            (0)     edge   node{}  (A)
            (A)     edge   node{}  (B)
            (B)     edge   node{}  (0)
            (0')     edge   node{}  (A')
            (A')     edge   node{}  (B')
            (B')     edge   node{}  (0')
            (A+B')   edge   node{}  (A');
        \end{tikzpicture}
        \end{center}
    \caption{the reaction network $\rn$ (left) is weakly reversible and has a deficiency of zero; its augmentation $\bar{\rn}$ (right) has neither property.}
    \label{fig:er1}
    \end{figure}
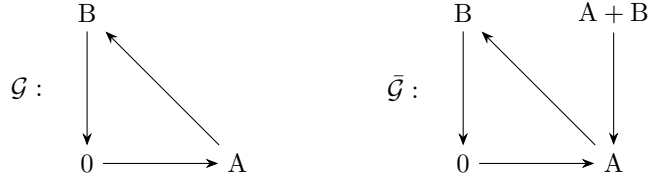

    $\rn$ is a simple two-dimensional network, it is \textit{weakly reversible} and has a \textit{deficiency} of zero (the definitions of these two notions can be found in \cite{anderson2015stochastic}). It is well-known that MAK SRNs satisfying said properties are ergodic for all choices of rate constants -- see \cite{anderson2010product} and \cite{anderson2018non}. One could then think that, by adding to the system any new reaction with non-positive reaction vector, ergodicity should be preserved. However, this is not always the case. There are multiple examples disproving this intuition, usually exploiting some boundary effect (see for example \cite{anderson2020tier} and \cite{agazzi2020seemingly}). It is then non-trivial to say that $\bar{\rn}$, obtained from $\rn$ by adding the reaction $\text{A}+\text{B} \to \text{A}$, still originates ergodic MAK SRNs for all choices of rate constants. This can be proved by applying Theorem~\ref{thm:lsc} to $\bar{\rn}$, with
    \begin{equation*}
        M =
        \begin{pmatrix} 
            1 & 0 \\ 
            -1 & 0 \\ 
            0 & -1 
        \end{pmatrix},
    \end{equation*}
    a generic $K^Y$, and $K^X$ equal to $K^Y$ in all components except for $\kappa^X_{\text{A}+\text{B} \to \text{A}} = 0$. It follows from this equality that the process $X$ is actually based on $\rn$, because the rates of all transitions associated with $\text{A}+\text{B} \to \text{A}$ are zero, meaning that the reaction is not really present in the model. $X$ is then ergodic, for the reasons explained above. $Y$, on the other side, is a general MAK SRN associated with $\bar{\rn}$, and thanks to Theorem~\ref{thm:lsc} we conclude that it can be coupled with $X$ in such a way that
    \begin{equation}
    \label{eq:erg}
        \P\brb{X_\text{A}(t) = Y_\text{A}(t), \ X_\text{B}(t) \geq Y_\text{B}(t) \ \ \forall t \geq 0} = 1.
    \end{equation}
    
    Note that the situation just described is a special case of the result displayed in Figure~\ref{fig:er2}, where two of the assumptions used so far, namely $\kappa_{\textup{B} \to 0}^X = \kappa_{\textup{B} \to 0}^Y$ and $\kappa_{\textup{A}+\textup{B} \to \textup{A}}^X =0$, are replaced by the less restrictive $\kappa_{\textup{B} \to 0}^X \leq \kappa_{\textup{B} \to 0}^Y$ and $\kappa_{\textup{A}+\textup{B} \to \textup{A}}^X \leq \kappa_{\textup{A}+\textup{B} \to \textup{A}}^Y$. This preordering structure for $\bar{\rn}$ was identified by our algorithm, together with two others, each describing the consequences of different sets of changes in the rate constants.

    \begin{figure}[htbp]
        \begin{center}
            \vskip 8 pt
            \begin{tikzpicture}
                \node[state] (0)    at (0,0)        {$0$}; 
                \node[state] (A)    at (2,0)        {$\textcolor{blue}{\text{A}}$}; 
                \node[state] (B)    at (0,2)        {$\textcolor{red}{\text{B}}$};
                \node[state] (A+B)    at (2,2)        {$\textcolor{blue}{\text{A}}+\textcolor{red}{\text{B}}$};
                \path[-{Stealth}]
                (0)     edge[color = blue]    node{}  (A)
                (A)     edge[color = blue]    node{}  (B)
                (B)     edge[color = Green]    node{}  (0)
                (A+B)   edge[color = Green]    node{}  (A);
            \end{tikzpicture}
        \end{center}
        \caption{the rate constants satisfy $\kappa_{0 \to \textup{A}}^X = \kappa_{0 \to \textup{A}}^Y$, $\kappa_{\textup{A} \to \textup{B}}^X = \kappa_{\textup{A} \to \textup{B}}^Y$, $\kappa_{\textup{B} \to 0}^X \leq \kappa_{\textup{B} \to 0}^Y$ and $\kappa_{\textup{A}+\textup{B} \to \textup{A}}^X \leq \kappa_{\textup{A}+\textup{B} \to \textup{A}}^Y$, and by Theorem~\ref{thm:lsc} the species counts fulfill $X_\textup{A}(t) = Y_\textup{A}(t)$ and $X_\textup{B}(t) \geq Y_\textup{B}(t)$ for all $t \geq 0$ almost surely.}
        \label{fig:er2}
    \end{figure}
    
    After applying Theorem~\ref{thm:lsc}, the missing link between \eqref{eq:erg} and the ergodicity of $Y$ is the tightness of the family of its time-$t$ distributions. The tightness of $Y$, however, can easily be derived from that of $X$ -- implied by its ergodicity -- via \eqref{eq:erg}. We then conclude that $Y$ is ergodic as well, by means of Prohorov's and Krylov-Bogoliubov theorems (that can be found in \cite{billingsley2013convergence} and \cite{da1996ergodicity}, respectively). Recall that $Y$ is any MAK SRN associated with $\bar{\rn}$, so we proved that adding the reaction $\text{A}+\text{B} \to \text{A}$ to $\rn$ does not break the stability of the system, even if it prevents us from applying the deficiency theory. The analysis of problems similar to this is typical in SRN theory, further motivating the study and application of the tools presented in this paper.
\end{example}

\section{Conclusions}

In this work, we established general comparison results for mass-action stochastic reaction networks equipped with different choices of rate constants, and translated these results into an explicit and fully implementable algorithm. More precisely, in Theorem~\ref{thm:pcc} we identified a set of verifiable conditions under which two continuous-time Markov chains can be coupled so as to preserve a given relation for all times (up to the first explosion), and in Theorem~\ref{thm:lsc} we restricted our scope to stochastic reaction networks and matrix preorder relations. We then applied the algorithm to several examples, recovering known preordering structures, extending them, and introducing new ones. By doing so, we gained new insights into how the dynamics of the various models depend on the rates of the corresponding reactions. We also described (see Examples~\ref{ex:lkv} and \ref{ex:erg}) how our methods could be used to study the effects of adding or removing reactions instead of just changing the rates of the existing ones.

A key feature of our approach is the linearity -- hence low computational cost -- of the hypotheses of Theorem~\ref{thm:lsc}; checking them reduces to a series of feasibility problems in linear programming and few matrix multiplications. This, together with a high degree of parallelizability, allows the search for admissible preordering structures to be efficient, making it possible to explore a large number of candidates even for reaction networks with many species and reactions. The algorithm therefore provides an effective computational tool for systematically identifying which preordering structures are compatible with a given network.

At the same time, our analysis highlights a fundamental limitation of this kind of almost-sure pathwise comparison. It is very strong but quite fragile (see Examples~\ref{ex:sis} and \ref{ex:rmm}), and for many reaction networks no non-trivial preordering structure exists. In this sense, a weaker but more flexible ordering principle would be preferable and, for this reason, an important direction for future work is to investigate alternative and more relaxed comparison notions. One possibility could be to simply compare the expected values of the two processes over time. Developing such weaker yet more robust comparison principles may broaden considerably the range of reaction networks for which informative monotonicity properties can be obtained.

\begin{appendices}
    \allowdisplaybreaks
    \section{Marginalizability of CTMCs}
    \label{sec:app}
    
    The problem of marginalizability of a CTMC is a special case of lumpability. Roughly speaking, the general question is: given a CTMC, is the Markov property preserved under a lumping of its state space? In the context of marginalizability, the state space is multidimensional, and states are grouped according to the value of a subset of their components. The notion of lumpability was first introduced for discrete-time Markov chains by \cite{kemeny1969finite} and later adapted to the continuous-time setting and studied in multiple works, with assumptions of varying strength on the state space (finite or countable) and on the rate matrix (uniformizable or non-explosive). To the best of our knowledge, the most general treatment of the subject is that of \cite{ball1993lumpability}, which still excludes explosive processes. An extension of \cite[Theorem~3.1]{ball1993lumpability} to potentially explosive chains is therefore given in this appendix; this result is essential to prove our Theorem~\ref{thm:pcc} in its full generality.
    
    \begin{theorem}
    \label{thm:app}
        Let $E \subseteq \Z^d$. Consider a CTMC $W = \brb{W(t)}_{t \geq 0}$ with state space $E \times E$ and rate matrix $Q^W$ having finite diagonal elements (i.e.\ no instantaneous states), such that for all $x, y, x', y' \in E$ with $x \neq x'$ and $y \neq y'$
        \begin{equation}
        \label{eq:indy}
            \sum_{z' \in E} Q^W\brb{(x,y), (x',z')} = q^X(x,x')
        \end{equation}
        is independent of $y$, and
        \begin{equation}
        \label{eq:indx}
            \sum_{z' \in E} Q^W\brb{(x,y), (z',y')} = q^Y(y,y')
        \end{equation}
        is independent of $x$. Then, there exist two CTMCs $X = \brb{X(t)}_{t \geq 0}$ and $Y = \brb{Y(t)}_{t \geq 0}$ with state space $E$ and transition rates $q^X$ and $q^Y$ respectively, such that $\tau^X \wedge \tau^Y = \tau^W$ and
        \begin{equation*}
            W(t) = \brb{X(t), Y(t)} \ \ \forall t \in [0,\tau^W).
        \end{equation*}
    \end{theorem}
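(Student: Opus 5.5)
The plan is to work with the jump-chain/holding-time description of $W$ and define $X$ and $Y$ as the coordinate projections of $W$ up to $\tau^W$. After $\tau^W$ we continue each of them independently by fresh chains. First, we realize $W$ through its embedded discrete-time jump chain $(Z_n)_{n \geq 0}$ on $E \times E$ and exponential holding times $(S_n)$ with parameters $-Q^W(Z_n,Z_n)$. This is possible because there are no instantaneous states. Set $J_n = S_0 + \dots + S_{n-1}$, so that $\tau^W = \lim_n J_n$.

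Write $Z_n = (Z^1_n, Z^2_n)$ and define $\tilde X(t) = Z^1_n$ and $\tilde Y(t) = Z^2_n$ for $t \in [J_n, J_{n+1})$. Two kinds of jump of $W$ leave $\tilde X$ unchanged: the jumps $(x,y) \to (x,y')$, and the "diagonal" jumps $(x,y)\to(x',y')$ with $x'=x$ excluded from \eqref{eq:indy}. Here \eqref{eq:indy} is applied with $x\neq x'$. Summing over $y'\neq y$ also handles the case $z'=y$, which is included in the sum over $z' \in E$.

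The core step is to show that $\tilde X$ on $[0,\tau^W)$ has the law of a CTMC with rates $q^X$ killed at some time, and likewise for $\tilde Y$. I would do this with a martingale-problem or generator computation. For bounded $f : E \to \R$ and $g(x,y) = f(x)$, \eqref{eq:indy} gives
\[
Q^W g(x,y) = \sum_{x' \neq x} q^X(x,x')\bigl(f(x')-f(x)\bigr),
\]
a function of $x$ alone. Hence $f(\tilde X(t\wedge \sigma_k)) - \int_0^{t\wedge\sigma_k} q^X f(\tilde X(s))\,ds$ is a martingale, where $\sigma_k$ is the exit time of $W$ from the $k$-th set of an increasing exhausting sequence of finite sets. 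This localization handles explosion.

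To avoid martingale-problem uniqueness issues for possibly explosive $q^X$, I would instead prove directly that the successive distinct states and holding times of $\tilde X$ have the correct joint law. I would argue by a strong-Markov/competing-exponentials computation. While $\tilde X$ sits at $x$, the total rate at which $W$ moves its first coordinate to $x'$ is always $q^X(x,x')$, whatever the second coordinate is, by \eqref{eq:indy}. A time-change/thinning argument (or the standard proof of \cite{ball1993lumpability}, applied to the stopped processes $W(\cdot\wedge\sigma_k)$, which are non-explosive) then shows that the holding time is $\mathrm{Exp}(-q^X(x,x))$ and the next state has distribution $q^X(x,\cdot)/(-q^X(x,x))$, independent of the past. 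Passing $k\to\infty$ gives consistent laws on $[0,\tau^W)$. I expect this step to be the main obstacle: the identification of the law of a lumped process up to a random explosion time that is not a stopping time of the lumped process alone.

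Next, we compare explosion times. The process $\tilde X$ leaves every finite set before $t$ exactly when $X$ does, and similarly for $\tilde Y$. If both $\tilde X$ and $\tilde Y$ stayed in finite sets up to $\tau^W$, then $W$ would stay in a finite product set, which contradicts the definition of $\tau^W$ whenever $\tau^W<\infty$. Conversely, each coordinate's explosion forces that of $W$. Hence $\tau^{\tilde X}\wedge\tau^{\tilde Y} = \tau^W$ on $\{\tau^W<\infty\}$, where $\tau^{\tilde X}$ denotes the first time $\tilde X$ escapes all finite sets, which may be $\tau^W$ itself.

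Finally, we extend the processes. If $\tilde X$ has not exploded at $\tau^W$, it has a limit $\tilde X(\tau^W-)\in E$, since it eventually stops jumping. In that case we continue it after $\tau^W$ by an independent CTMC with rates $q^X$ started from that point, on an enlarged probability space; otherwise we keep it exploded. The strong Markov property at $\tau^W$ then shows that the concatenation $X$ is a CTMC with rates $q^X$, and the same construction gives $Y$. This yields $W=(X,Y)$ on $[0,\tau^W)$ and $\tau^X\wedge\tau^Y=\tau^W$.
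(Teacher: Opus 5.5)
Your plan (project $W$ onto its coordinates and, after $\tau^W$, continue a non-exploded coordinate with an independent chain with its own rates) is viable and differs from the paper's route. However, the step you flag as the main obstacle is left open, and the justification you sketch for it fails as written.

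\textbf{The fallback via stopped processes does not work.} Applying \cite{ball1993lumpability} to $W(\cdot\wedge\sigma_k)$ fails because $\sigma_k$ can be triggered by the second coordinate alone. The stopped chain then has first-coordinate rates $q^X(x,\cdot)$ at some states $(x,y)$ and $0$ at others $(x,y'')$. This violates \eqref{eq:indy}, so $\tilde X(\cdot\wedge\sigma_k)$ is in general not Markov. This is exactly why the paper truncates with $Q_n$ in \eqref{eq:defqn}: it freezes only the coordinate that has left the ball, so the marginal rates $q^X\mathbbm{1}_{\mathrm{B}_n}$ in \eqref{eq:indyn} depend on $x$ alone.

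\textbf{The holding-time claim and the concatenation step are not justified.} The holding times of $\tilde X$ on $[0,\tau^W)$ are not $\mathrm{Exp}(-q^X(x,x))$, because a sojourn can be cut short by $\tau^W$. The ``strong Markov property at $\tau^W$'' cannot justify the concatenation either: $W$ is dead at $\tau^W$, which is not a stopping time for $\tilde X$. What you must show is that learning that a sojourn of $\tilde X$ was interrupted by explosion carries no information about the first coordinate's clock.

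\textbf{How your own key observation closes the gap.} Take a sojourn of the first coordinate at $x$, entered at a jump time $T$ of $W$ with $W(T)=(x,y_0)$, and let $\lambda=-q^X(x,x)$.
By \eqref{eq:indy}, the moves changing the first coordinate have total rate $\lambda$ at every $(x,y)$. Competing exponentials therefore let you realize $W$ on this sojourn as $(x,V)$, where:
\begin{itemize}
\item $V$ is a minimal chain with rates $Q^W((x,\cdot),(x,\cdot))$ started at $y_0$;
\item $H_1\sim\mathrm{Exp}(\lambda)$ is an independent clock, at which the first coordinate jumps to $x'$ with probability $q^X(x,x')/\lambda$, whatever $V(H_1-)$ is.
\end{itemize}
Hence $H_1$ is independent of the explosion time $\sigma$ of $V$, and $\tau^W=T+\sigma$ on $\{H_1\geq\sigma\}$. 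With your continuation clock $H_2\sim\mathrm{Exp}(\lambda)$, the sojourn length $H_1\mathbbm{1}_{\{H_1<\sigma\}}+(\sigma+H_2)\mathbbm{1}_{\{H_1\geq\sigma\}}$ is $\mathrm{Exp}(\lambda)$ by memorylessness. The exit state has law $q^X(x,\cdot)/\lambda$, and given $x$ both are independent of each other and of the past. The strong Markov property of $W$ at $T+H_1$, together with induction over sojourns, then identifies the jump chain and holding times of $X$ with those of the minimal $q^X$-chain.

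The same rate structure settles a point you assert without proof. On a finite set $\Gamma$ the jump intensity of $\tilde X$ is bounded by $\max_{z\in\Gamma}(-q^X(z,z))$, so a non-exploding $\tilde X$ makes only finitely many jumps before $\tau^W<\infty$. This is what guarantees that $\tilde X(\tau^W-)$ exists.

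\textbf{Comparison with the paper.} Repaired this way, your route is more elementary and self-contained than the paper's. The paper encodes your independent continuation in the extended generator \eqref{eq:defq} on $E_\Delta\times E_\Delta$. It gets the Markov property of the truncated marginals $X_n$ from the sufficiency part of Ball's lumpability theorem, and then passes to the limit $n\to\infty$ in the finite-dimensional distributions.
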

    
    \begin{proof}
        First, note that all three mentioned CTMCs may explode in finite time, but the statement of the theorem only concerns their behavior up to the first explosion. For the purpose of this proof and for simplicity, we can then consider an extended state space $E_\Delta = E \cup \cb{\Delta}$ for $X$ and $Y$ and construct the chains in their minimal versions, in which they are absorbed by the cemetery state $\Delta$ at explosion. This ensures a rigorous and convenient framework for our argument, without affecting the chains' behavior before the explosion.
    
        Let us extend $Q^W$ to a new rate matrix $Q$ on the state space $E_\Delta \times E_\Delta$ in the following way, for all $x,y,x',y' \in E$:
        \begin{align}
        \label{eq:defq}
            Q\brb{(x,y), (x',y')} & = Q^W\brb{(x,y), (x',y')}, \\
            Q\brb{(x,\Delta), (x',\Delta)} & = q^X(x,x'), \notag \\
            Q\brb{(\Delta,y), (\Delta,y')} & = q^Y(y,y'), \notag
        \end{align}
        where $q^X(x,x')$ and $q^Y(y,y')$ are defined in terms of $Q^W$ in \eqref{eq:indy} and \eqref{eq:indx} when $x \neq x'$ and $y \neq y'$, while $q^X(x,x) = - \sum\limits_{x' \neq x} q^X(x,x')$ and $q^Y(y,y) = - \sum\limits_{y' \neq y} q^Y(y,y')$ follow the usual convention. Note that no transitions are defined among the four sets $E \times E$, $E \times \cb{\Delta}$, $\cb{\Delta} \times E$, and $\cb{\Delta} \times \cb{\Delta}$, partitioning $E_\Delta \times E_\Delta$: it is only possible to move from one to another via explosion.
        
        Let $\rb{X,Y} = \Brb{\brb{X(t), Y(t)}}_{t \geq 0}$ be a CTMC with transition rates $Q$, and let $\tau$ be its explosion time (a shorthand for $\tau^{(X,Y)}$) and $\tau^X$, $\tau^Y$ the ones of the marginal processes $X$ and $Y$. We observe that the identity $\tau = \tau^X \wedge \tau^Y$ follows immediately from the definition of the explosion time given in \eqref{eq:exp}. In order to define the processes after explosion, set
        \begin{equation}
        \label{eq:cem}
            X(\tau^X) = \Delta \ \ \text{and} \ \ Y(\tau^Y) = \Delta.
        \end{equation}
        Now suppose that $\brb{X(0), Y(0)} \in E \times E$. In the event that $\tau = \tau^X = \tau^Y$, the behavior of the chain after explosion is already fully determined. On the other hand, if $\tau = \tau^Y < \tau^X$, \eqref{eq:cem} only gives us $Y$ at time $\tau$, so we need to assign a value to $X$ as well. In that case, define $X(\tau) = \limsup\limits_{t \to \tau^-} X(t)$ (component-wise). Analogously, set $Y(\tau) = \limsup\limits_{t \to \tau^-} Y(t)$ if $\tau = \tau^X < \tau^Y$.
        
        Since $Q$ coincides with $Q^W$ on $E \times E$, we can couple $(X,Y)$ and $W$ so that $\tau^X \wedge \tau^Y = \tau = \tau^W$ and 
        \begin{equation*}
            W(t) = \brb{X(t), Y(t)} \ \ \forall t \in [0,\tau^W).
        \end{equation*}
        What remains to prove is that $X$ and $Y$ themselves are CTMCs, with rates $q^X$ and $q^Y$. Let us show it for $X$, the argument for $Y$ being completely analogous.
        
        Choose a norm $\nrm{\cdot}$ on $\R^d$. For any positive integer $n$, let
        \begin{align*}
            \tau^X_n &= \inf \bcb{t \geq 0 \st \nrm{X(t)} > n}, \\
            \tau^Y_n &= \inf \bcb{t \geq 0 \st \nrm{Y(t)} > n}, \\
            \tau_n &= \tau^X_n \wedge \tau^Y_n.
        \end{align*}
        Define a truncated CTMC $\rb{X_n, Y_n} = \Brb{\brb{X_n(t), Y_n(t)}}_{t \geq 0}$ on the state space $E \times E$ via the transition rates
        \begin{equation}
        \label{eq:defqn}
            Q_n\brb{(x,y), (x',y')} = 
            \begin{cases}
                Q\brb{(x,y), (x',y')} & \text{if} \ \nrm{x} \leq n, \, \nrm{y} \leq n, \\
                q^X(x,x') & \text{if} \ \nrm{x} \leq n, \, \nrm{y} > n, \, y = y', \\
                q^Y(y,y') & \text{if} \ \nrm{x} > n, \, \nrm{y} \leq n, \, x = x', \\
                0 & \text{otherwise},
            \end{cases}
        \end{equation}
        for all $x,y,x',y' \in E$. This definition makes it possible to couple $\rb{X, Y}$ with $\rb{X_n, Y_n}$ in such a way that
        \begin{equation}
        \label{eq:cpl1}
            \brb{X(t), Y(t)} = \brb{X_n(t), Y_n(t)} \ \ \forall t \in [0, \tau_n].
        \end{equation}
        In particular, $X(t \wedge \tau_n) = X_n(t\wedge \tau_n)$. Since for all $x,y,x' \in E$ with $\nrm{x} \leq n$ and $\nrm{y} > n$ we have the identities
        \begin{equation*}
            Q\brb{(x,\Delta), (x',\Delta)} = q^X(x,x') = Q_n\brb{(x,y), (x',y)},
        \end{equation*}
        we can further couple $\rb{X, Y}$ and $\rb{X_n, Y_n}$ so that, if $X(\tau^Y)=X_n(\tau^Y_n)$, then 
        \begin{equation}
        \label{eq:cpl2}
            X\brb{(\tau^Y+h) \wedge \tau^X_n} = X_n(\tau^Y_n+h) \ \ \forall h \geq 0.
        \end{equation}
        
        Now, by combining \eqref{eq:indy}, \eqref{eq:defq}, and \eqref{eq:defqn} we derive
        \begin{equation}
        \label{eq:indyn}
            \sum_{z' \in E} Q_n\brb{(x,y), (x',z')} = q^X(x,x') \mathbbm{1}_{\mathrm{B}_n}(x),
        \end{equation}
        for all $x,y,x' \in E$ with $x \neq x'$, where $\mathrm{B}_n = \cb{z \in \Z^d \st \nrm{z} \leq n}$ is the discrete ball of radius $n$ centered at the origin. As a consequence, by classical results on lumpability of Markov chains such as \cite[Theorem~3.1]{ball1993lumpability}, $X_n$ is a CTMC with rates $q^X(x,x') \mathbbm{1}_{\mathrm{B}_n}(x)$. More precisely, what is stated in \cite{ball1993lumpability} is the equivalence between \eqref{eq:indyn} and $X_n$ being a CTMC, under the additional hypotheses of irreducibility and positive recurrence of $\rb{X_n, Y_n}$. However, these assumptions are used only to prove the necessity of \eqref{eq:indyn} but not its sufficiency, which is what we need here.
        
        To obtain our desired result about $X$, denote by $\hat{X}$ a minimal CTMC on the state space $E_\Delta$, with transition rates $q^X$ and the same initial distribution as $X$. Let $\tau^{\hat{X}}$ be its explosion time, and define $\tau^{\hat{X}}_n = \inf \cb{t \geq 0 \st \nrm{\hat{X}(t)} > n}$. To conclude the proof, it suffices to show, as explained in \cite[Chapter 2]{norris1998markov}, that for any finite collection of times $0 < t_1 < \dots < t_\ell$ and states $x_1, \dots, x_\ell \in E_\Delta$ we have
        \begin{equation}
        \label{eq:dist}
            \P\brb{X(t_1)= x_1, \dots, X(t_\ell) = x_\ell} = \P\brb{\hat{X}(t_1) = x_1, \dots, \hat{X}(t_\ell) = x_\ell},
        \end{equation}
        i.e.\ $X$ and $\hat{X}$ have identical finite-dimensional distributions.
        
        As a consequence of \eqref{eq:cpl1}, for $0 \leq j \leq \ell$ we have
        \begin{align}
        \label{eq:case1}
             \hspace{70 pt} & \hspace{-70 pt} \P \brb{X(t_1) = x_1, \dots, X(t_\ell) = x_\ell, \ t_j < \tau^X \leq t_{j+1}, \ t_j < \tau^Y} \notag \\
             = & \ \P \Brb{\bigcap_{k=1}^j X(t_k) = x_k, \ t_j < \tau^X \leq t_{j+1}, \ t_j < \tau^Y} \notag \\
             & \cdot \P \Brb{\bigcap_{k=j+1}^\ell X(t_k) = x_k \Bigm| \bigcap_{k=1}^j X(t_k) = x_k, \ t_j < \tau^X \leq t_{j+1}, \ t_j < \tau^Y} \notag \\
             = & \lim_{n \to \infty} \P \Brb{\bigcap_{k=1}^j X_n(t_k)= x_k, \ t_j < \tau^X_n \leq t_{j+1}, \ t_j < \tau^Y_n} \prod_{k=j+1}^\ell \mathbbm{1}_{\cb{\Delta}}(x_k),
        \end{align}
        where we set $t_0 = 0$ and $t_{\ell + 1} = \infty$. To handle the extremal cases in the formula above and those that follow, note that, as usual, we consider empty sums, products, or intersections of events to be equal to 0, 1, or the whole sample space, respectively.
        
        Now, in the event that $\tau = \tau^Y < \tau^X$, we have that the total number of jumps of $X$ up to time $\tau$ is almost surely finite. Indeed, the definition of $\tau^X$ given in \eqref{eq:exp} ensures the existence of a (random) finite set $\Gamma$ such that $X(t) \in \Gamma$ for all $t \in [0,\tau)$. Since each $X_n$ coincides with $X$ on the interval $[0, \tau_n] \subset [0, \tau)$ and is a CTMC with rates $q^X(x,x') \mathbbm{1}_{\mathrm{B}_n}(x)$, there is a uniform bound on the number of jumps of $X_n = X$ in $[0, \tau_n]$, given for example by a Poisson process with rate $\max\limits_{x \in \Gamma} - q^X(x,x)$ evaluated at time $\tau$. Hence, the number of jumps of $X$ in all of $[0, \tau)$ is almost surely finite. This means that $X(\tau) = \limsup\limits_{t \to \tau^-} X(t)$ coincides with $X(t)$ for $t$ close enough to $\tau$.
        
        As a consequence, there exists an almost surely finite $N$ such that, for any $n > N$,
        \begin{equation*}
            \mathbbm{1}_{\cb{\tau^Y < \tau^X}} X(\tau^Y) = \mathbbm{1}_{\cb{\tau^Y < \tau^X}} X(\tau^Y_n) = \mathbbm{1}_{\cb{\tau^Y < \tau^X}} X_n(\tau^Y_n).
        \end{equation*}
        Thus, due to the coupling \eqref{eq:cpl2}, for  $n > N$ and $h \geq 0$,
        \begin{equation*}
             \mathbbm{1}_{\cb{\tau^Y < \tau^X}} X\brb{(\tau^Y+h) \wedge \tau^X_n} = \mathbbm{1}_{\cb{\tau^Y < \tau^X}} X_n(\tau^Y_n+h).
        \end{equation*}
        
        Using the last identity, for $0 \leq i < j \leq \ell$ we obtain
        \begin{align*}
            \hspace{30 pt} & \hspace{-30 pt} \P \brb{X(t_1)= x_1, \dots, X(t_\ell) = x_\ell, \ t_i < \tau^Y \leq t_{i+1}, \ t_j < \tau^X \leq t_{j+1}} \\
            = & \ \P \Brb{\bigcap_{k=1}^i X(t_k) = x_k, \ t_i < \tau^Y \leq t_{i+1}, \ t_j < \tau^X \leq t_{j+1}} \\
            & \cdot \P \Brb{\bigcap_{k=i+1}^j X(t_k) = x_k \Bigm| \bigcap_{k=1}^i X(t_k) = x_k, \ t_i < \tau^Y \leq t_{i+1}, \ t_j < \tau^X \leq t_{j+1}} \\
            & \cdot \P \Brb{\bigcap_{k=j+1}^\ell X(t_k) = x_k \Bigm| \bigcap_{k=1}^j X(t_k) = x_k, \ t_i < \tau^Y \leq t_{i+1}, \ t_j < \tau^X \leq t_{j+1}} \\
            = & \lim_{n \to \infty} \bbsb{\P \Brb{\bigcap_{k=1}^i X_n(t_k) = x_k, \ t_i < \tau^Y_n \leq t_{i+1}, \ t_j < \tau^X_n \leq t_{j+1}} \\
            & \cdot \P \Brb{\bigcap_{k=i+1}^j X_n(\tau^Y_n + t_k - \tau^Y) = x_k \Bigm| \bigcap_{k=1}^i X_n(t_k) = x_k, \ t_i < \tau^Y_n \leq t_{i+1}, \ t_j < \tau^X_n \leq t_{j+1}}} \\
            & \cdot \prod_{k=j+1}^\ell \mathbbm{1}_{\cb{\Delta}}(x_k) \\
            = & \lim_{n \to \infty} \bbsb{\P \Brb{\bigcap_{k=1}^i X_n(t_k) = x_k, \ t_i < \tau^Y_n \leq t_{i+1}, \ t_j < \tau^X_n \leq t_{j+1}} \\
            & \cdot \P \Brb{\bigcap_{k=i+1}^j X_n(t_k) = x_k \Bigm| \bigcap_{k=1}^i X_n(t_k) = x_k, \ t_i < \tau^Y_n \leq t_{i+1}, \ t_j < \tau^X_n \leq t_{j+1}}} \\
            & \cdot \prod_{k=j+1}^\ell \mathbbm{1}_{\cb{\Delta}}(x_k) \\
            = & \lim_{n \to \infty} \P \Brb{\bigcap_{k=1}^j X_n(t_k) = x_k, \ t_i < \tau^Y_n \leq t_{i+1}, \ t_j < \tau^X_n \leq t_{j+1}} \prod_{k=j+1}^\ell \mathbbm{1}_{\cb{\Delta}}(x_k),
        \end{align*}
        where we set $t_0 = 0$ and $t_{\ell+1} = \infty$, as before. Combining this with \eqref{eq:case1}, for $0 \leq j \leq \ell$ we get
        \begin{align*}
            \hspace{70 pt} & \hspace{-70 pt} \P \brb{X(t_1) = x_1, \dots, X(t_\ell) = x_\ell, \ t_j < \tau^X \leq t_{j+1}} \\
            = & \sum_{i=0}^\ell \P \brb{X(t_1) = x_1, \dots, X(t_\ell) = x_\ell, \ t_j < \tau^X \leq t_{j+1}, \ t_i < \tau^Y \leq t_{i+1}} \\
            = & \sum_{i=0}^{j-1} \P \brb{X(t_1) = x_1, \dots, X(t_\ell) = x_\ell, \ t_i < \tau^Y \leq t_{i+1}, \ t_j < \tau^X \leq t_{j+1}} \\
            & + \P \brb{X(t_1) = x_1, \dots, X(t_\ell) = x_\ell, \ t_j < \tau^X \leq t_{j+1}, \ t_j < \tau^Y} \\
            = & \lim_{n \to \infty} \bbsb{\sum_{i=0}^{j-1} \P \Brb{\bigcap_{k=1}^j X_n(t_k) = x_k, \ t_i < \tau^Y_n \leq t_{i+1}, \ t_j < \tau^X_n \leq t_{j+1}} \\
            & + \P \Brb{\bigcap_{k=1}^j X_n(t_k)= x_k, \ t_j < \tau^X_n \leq t_{j+1}, \ t_j < \tau^Y_n}} \prod_{k=j+1}^\ell \mathbbm{1}_{\cb{\Delta}}(x_k) \\
            = & \lim_{n \to \infty} \P \Brb{\bigcap_{k=1}^j X_n(t_k) = x_k, \ t_j < \tau^X_n \leq t_{j+1}} \prod_{k=j+1}^\ell \mathbbm{1}_{\cb{\Delta}}(x_k) \\
            = & \lim_{n \to \infty} \P \Brb{\bigcap_{k=1}^j \hat{X}(t_k) = x_k, \ t_j < \tau^{\hat{X}}_n \leq t_{j+1}} \prod_{k=j+1}^\ell \mathbbm{1}_{\cb{\Delta}}(x_k) \\
            = & \ \P \Brb{\bigcap_{k=1}^j \hat{X}(t_k) = x_k, \ t_j < \tau^{\hat{X}} \leq t_{j+1}} \prod_{k=j+1}^\ell \mathbbm{1}_{\cb{\Delta}}(x_k) \\
            = & \ \P \brb{\hat{X}(t_1) = x_1, \dots, \hat{X}(t_\ell)= x_\ell, \ t_j < \tau^{\hat{X}} \leq t_{j+1}}.
        \end{align*}
        Finally, summing over $j$ yields \eqref{eq:dist}, and hence the thesis.
    \end{proof}
\end{appendices}

\section*{Acknowledgements}

The project was partially supported by the MUR PRIN grant ``ConStRAINeD'' with number 2022XRWY7W.

\end{document}